\newcommand{\co}{\colon\thinspace}
\begin{document}

\newtheorem{thm}{Theorem}
\newtheorem{ax}[thm]{Axiom}
\newtheorem{cor}[thm]{Corollary}
\newtheorem{defn}[thm]{Definition}
\newtheorem{ex}[thm]{Example}
\newtheorem{lemma}[thm]{Lemma}
\newtheorem{prop}[thm]{Proposition}
\newtheorem{remark}[thm]{Remark}

\newcommand{\sections}{\renewcommand{\thethm}{\thesection.\arabic{thm}}
           \setcounter{thm}{0}}
\newcommand{\nosections}{}
\newcommand{\subsections}{\setcounter{thm}{0}\renewcommand{\thethm}
           {\thesubsection.\arabic{thm}}}
\newcommand{\nosubsections}{\renewcommand{\thethm}{\thesection.\arabic{thm}}
           \setcounter{thm}{0}}
\newcommand{\linnum}{\stepcounter{thm}\tag{\thethm}}

\newcommand{\bC}{\mathbb{C}}
\newcommand{\bR}{\mathbb{R}}
\newcommand{\bZ}{\mathbb{Z}}
\newcommand{\cR}{\mathcal{R}}
\newcommand{\za}{\alpha}
\newcommand{\zb}{\beta}
\newcommand{\zd}{\delta}
\newcommand{\zg}{\gamma}
\newcommand{\zj}{\psi}
\newcommand{\zl}{\lambda}
\newcommand{\zm}{\mu}
\newcommand{\zn}{\nu}
\newcommand{\zp}{\pi}\newcommand{\zr}{\rho}
\newcommand{\zs}{\sigma}
\newcommand{\zt}{\tau}
\newcommand{\zx}{\xi}
\newcommand{\zv}{\varphi}
\newcommand{\zG}{\Gamma}
\newcommand{\zL}{\Lambda}

\newcommand{\SR}{S_{\mathcal{R}}}
\newcommand{\subm}{\sigma_{\mathcal{R}}}

\title{Latt\`{e}s maps and finite subdivision rules }
\author{J. W. Cannon}
\address{Department of Mathematics\\ Brigham Young University\\ Provo, UT
84602\\ U.S.A.}
\email{cannon@math.byu.edu}

\author{W. J. Floyd}
\address{Department of Mathematics\\ Virginia Tech\\
Blacksburg, VA 24061\\ U.S.A.}
\email{floyd@math.vt.edu}
\urladdr{http://www.math.vt.edu/people/floyd}

\author{W. R. Parry}
\address{Department of Mathematics\\ Eastern Michigan University\\
Ypsilanti, MI 48197\\ U.S.A.}
\email{walter.parry@emich.edu}
\date\today
\begin{abstract}
This paper is concerned with realizing Latt\`es maps as subdivision
maps of finite subdivision rules. The main result is that the Latt\`es
maps in all but finitely many analytic conjugacy classes can be
realized as subdivision maps of finite subdivision rules with one tile
type.  An example is given of a Latt\`es map which is not the subdivision
map of a finite subdivision rule with either i) two tile types
and 1-skeleton of the subdivision complex a circle or ii) one tile type.
\end{abstract}
\thanks{We thank Kevin Pilgrim for piquing our interest in realizing
Latt\`es maps as subdivision maps of finite subdivision rules.}
\keywords{finite subdivision rule, Latt\`es map, rational map, conformality}
\subjclass[2000]{Primary 37F10, 52C20; Secondary 57M12}
\maketitle

\sections

This paper is concerned with realizing rational maps by subdivision
maps of finite subdivision rules. If $\cR$ is an
orientation-preserving finite subdivision rule such that the
subdivision complex $\SR$ is a 2-sphere, then the subdivision map
$\subm$ is a postcritically finite branched map. Furthermore, $\cR$ has
bounded valence if and only if $\subm$ has no periodic critical
points. In \cite{BM} and \cite{CFP2}, Bonk-Meyer and Cannon-Floyd-Parry
prove that if $f$ is a postcritically finite rational map without periodic
critical points, then every sufficiently large iterate of $f$ is the
subdivision map of a finite subdivision rule $\cR$ with two tile types
such that the 1-skeleton of $\SR$ is a circle. Since finite
subdivision rules are essentially combinatorial objects, this gives
good combinatorial models for these iterates. It is especially
convenient to realize a postcritically finite map by the subdivision map of
a finite subdivision rule with either a single tile type or with two
tile types and 1-skeleton of the subdivision complex a circle.

While passing to an iterate is not usually a serious obstacle, it would
be preferable if one didn't need to do this. Suppose $f$ is a
postcritically finite rational map without periodic critical points.
We consider the following questions.
\begin{enumerate}
\item Is $f$ the subdivision map of a finite subdivision rule?
\item Is $f$ the subdivision map of a finite subdivision rule
with two tile types and 1-skeleton of the subdivision complex a circle?
\item Is $f$ the subdivision map of a finite subdivision rule with one
tile type?
\end{enumerate}
In this paper we consider these questions for Latt\`{e}s maps.  The
main result of this paper is to answer question 3 in the affirmative
for the maps in all but finitely many analytic conjugacy classes of
Latt\`{e}s maps.  In addition we exhibit a Latt\`{e}s map of degree 2
for which the answer to questions 2 and 3 is negative, although the
answer to question 1 is positive.

This paper has four sections.  The first section develops the setting
of Latt\`{e}s maps.  These results can be used to easily enumerate all
Latt\`{e}s maps of very small degree to test (not so easy) the above
three questions for them.

Section~\ref{sec:prune} presents the pruning lemma and immediate
consequences.  To describe the pruning lemma, let $f$ be a
Latt\`{e}$s$ map which is the subdivision map of a finite subdivision
rule with one tile type.  This obtains a subdivision complex structure
on the Riemann sphere whose 1-skeleton is a tree.  The postcritical
points of $f$ are vertices of this tree.  The pruning lemma states
that it is possible to prune this tree to obtain a subtree for which
every vertex with valence either 1 or 2 is a postcritical point and
this subtree is the 1-skeleton of a subdivision complex for which $f$
is the subdivision map.  So if $f$ is the subdivision map for a
subdivision complex whose 1-skeleton is a tree, then $f$ is the
subdivision map for a subdivision complex whose 1-skeleton is a tree
with a special form.  This special form is very restrictive.

Section~\ref{sec:largedeg} contains the main result in
Corollary~\ref{cor:main}, namely, that every map in almost every
analytic conjugacy class of Latt\`{e}s maps is the subdivision map of
a finite subdivision rule with one tile type.  This is essentially
proved in two theorems.  The first theorem,
Theorem~\ref{thm:onetilea}, states that every Latt\`{e}s map with
sufficiently large degree is the subdivision map of a finite
subdivision rule with one tile type.  The second theorem,
Theorem~\ref{thm:onetileb}, states that every nonrigid Latt\`{e}s map
is the subdivision map of a finite subdivision rule with one tile
type.  Since there are only
finitely many analytic conjugacy classes of rigid Latt\`{e}s maps with
a given degree (See the end of Section~\ref{sec:defn}.),
these two theorems give the main
result.  Theorems~\ref{thm:onetilea} and \ref{thm:onetileb} actually
say a bit more, giving additional information about the tilings.

The proof of Theorem~\ref{thm:onetilea} is much more difficult than
the proof of Theorem~\ref{thm:onetileb}.  It roughly parallels the
proof of the main theorem of \cite{CFP2}.  The strategy is to
construct a finite subdivision rule $\cR$ with one tile type, bounded
valence, and mesh approaching 0 combinatorially such that the
subdivision map $\subm$ of $\cR$ is isotopic to the Latt\`{e}s map $f$
relative to its postcritical set $P_f$.  Once this is achieved, our
results for expansion complexes \cite{CFP1} can be used to show that
$\subm$ and $f$ are in fact conjugate rel $P_f$, and so $f$ is the
subdivision map of a finite subdivision rule with one tile type.  The
finite subdivision rule $\cR$ is constructed using a lift of $f$ to
the complex plane and a standard tiling of the plane by regular
hexagons.  Our tiling of the Riemann sphere by one tile lifts to a
tiling of the plane which is combinatorially equivalent to this
standard tiling by regular hexagons.  The proof of
Theorem~\ref{thm:onetileb} does not require an isotopy and hence does
not require results for expansion complexes.  In this case our tiling
of the Riemann sphere by one tile lifts to a standard tiling of the
plane by parallelograms.

Section~\ref{sec:badquadratic} presents an example which shows that
the main result does not hold for every Latt\`{e}s map.  This example
is a quadratic Latt\`{e}s map $f$ for which both question 2 and
question 3 are false.  Question 1 is true for $f$, as we show that $f$
is the subdivision map of a finite subdivision rule with two tile
types, but the 1-skeleton of the subdivision complex is not a circle.
The Latt\`{e}s map $f$ has a lift to the complex plane of the form
$\widetilde{f}(z)=\za z$, where $\za=(1+\sqrt{-7})/2$ and the
postcritical set of $f$ lifts to the lattice generated by 1 and $\za$.
The complex conjugate of $f$ gives another such example. Yet another
example with these properties is given by a real cubic Latt\`{e}s map
$g$.  The map $g$ has a lift to the complex plane of the form
$\widetilde{g}(z)=\za z$, where $\za=\sqrt{-3}$ and the postcritical
set of $g$ lifts to the lattice generated by 1 and $(1+\sqrt{-3})/2$.
It is possible to prove these claims for $g$ much as we prove them for
$f$ in Section~\ref{sec:badquadratic}. The three analytic conjugacy
classes of Latt\`{e}s maps represented by these three maps are
probably the only ones for which question 3 is false.

As for question 1, as far as we know, every rational map with finite
postcritical set and no periodic critical points is the subdivision
map of a finite subdivision rule.

\section{Definitions and basic facts for Latt\`{e}s maps }\label{sec:defn}\nosubsections

Following Milnor \cite[Remark 3.5]{M}, we define a Latt\`{e}s map to be a
rational function from the Riemann sphere $\widehat{\bC}$ to itself
such that its local degree at every critical point is 2 and there are
exactly four postcritical points, none of which is also critical.  Let
$f\co \widehat{\bC}\to \widehat{\bC}$ be a Latt\`{e}s map.

As in Section 3.1 of \cite{M}, it follows that there exists an
analytic branched cover $\wp\co \bC\to \widehat{\bC}$ which is
branched exactly over the postcritical points of $f$ and the local
degree of $\wp$ at every branch point is 2.  (The function $\wp$ is a
Weierstrass $\wp$-function up to precomposing and postcomposing with
analytic automorphisms.) Let $\zL$ be the set of branch points of
$\wp$.  It is furthermore true that $\wp$ is a regular branched cover,
and its group of deck transformations $\zG$ is generated by the set of
all rotations of order 2 about the points of $\zL$.  We refer to $\zG$
as the \textbf{orbifold fundamental group} of $f$.  Given rotations
$z\mapsto 2 \zl-z$ and $z\mapsto 2 \zm-z$ of order 2 about the points
$\zl,\zm\in \zL$, their composition, the second followed by the first,
is the translation $z\mapsto z+2(\zl-\zm)$.  We may, and do, normalize
so that $0\in \zL$.  So $\zG$ contains a subgroup with index 2
consisting of translations of the form $z\mapsto z+2\zl$ with $\zl\in
\zL$.  It follows that $\zL$ is a lattice in $\bC$ and that the
elements of $\zG$ are the maps of the form $z\mapsto \pm z+2\zl$ for
some $\zl\in \zL$.

Douady and Hubbard show in \cite[Proposition 9.3]{DH} that
the map $f\co \widehat{\bC}\to
\widehat{\bC}$ lifts to a map $\widetilde{f}\co \bC\to \bC$ given by
$\widetilde{f}(z)=\za z+\zb$ for some imaginary quadratic algebraic
integer $\za$ (possibly an element of $\bZ$) such that $\za \zL
\subseteq \zL$ and some $\zb\in \zL$.  The following lemma is devoted
to determining to what extent $\za$, $\zb$, and $\zL$ are determined by
the analytic conjugacy class of $f$.

\begin{lemma}\label{lemma:unqss}  Let $f_0$ be a Latt\`{e}s map
which is analytically conjugate to $f$.  Let $\wp_0\co \bC\to
\widehat{\bC}$ be a branched cover for $f_0$ corresponding to $\wp$.
Let $\zL_0$ be the set of branch points of $\wp_0$, and assume that
$0\in \zL_0$.  Suppose that $f_0\co \widehat{\bC}\to \widehat{\bC}$
lifts to a map $\widetilde{f}_0\co \bC\to \bC$ given by
$\widetilde{f}_0(z)=\za_0z+\zb_0$.  Then the following hold.
\begin{enumerate}
  \item $\za_0=\pm \za$.
  \item $\zb_0=\zg \zb+\zd$ where $\zg\in \bC^\times $ and $\zd\in
(\za+1)\zL_0+2\zL_0$.
  \item $\zL_0=\zg \zL$ with $\zg$ as in statement 2.
\end{enumerate}
Conversely, if $\wp_0\co \bC\to \widehat{\bC}$ is a branched cover as
above, if $\zL_0$ is the set of branch points of $\wp_0$ with $0\in
\zL_0$, and if $\widetilde{f}_0(z)=\za_0z+\zb_0$ such that statements
1, 2, and 3 hold, then $\widetilde{f}_0$ is the lift of a Latt\`{e}$s$
map which is analytically conjugate to $f$.
\end{lemma}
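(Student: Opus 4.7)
The plan is to lift the conjugating Möbius transformation to an affine map on $\bC$ and read off statements (1)--(3) from one comparison of compositions. For the forward direction, let $h\co\widehat{\bC}\to\widehat{\bC}$ be a Möbius transformation with $f_0\circ h=h\circ f$. Since $h$ carries the postcritical set of $f$ to that of $f_0$ and $\wp,\wp_0$ are orbifold universal covers branched exactly over these sets, covering theory lifts $h$ to an entire map $\widetilde{h}\co\bC\to\bC$ with $\wp_0\circ\widetilde{h}=h\circ\wp$. This $\widetilde{h}$ intertwines the translation subgroup of $\zG$ with a subgroup of $\zG_0$, so $|\widetilde{h}'|$ is $2\zL$-periodic; being bounded and entire, $\widetilde{h}'$ is constant, whence $\widetilde{h}(z)=\zg z+\epsilon$. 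Matching branch points by local degree forces $\widetilde{h}(\zL)=\zL_0$, and since $0\in\zL\cap\zL_0$ the symmetric argument applied to $h^{-1}$ yields $\epsilon\in\zL_0$ and $\zg\zL=\zL_0$, which is statement~(3).

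Lifting the identity $f_0\circ h=h\circ f$ through $\wp$ gives $\wp_0\circ\widetilde{f}_0\circ\widetilde{h}=\wp_0\circ\widetilde{h}\circ\widetilde{f}$, so the two affine maps differ by a single deck transformation $\zs(w)=\pm w+2\zl_0$ of $\zG_0$:
\[
\za_0\zg z+(\za_0\epsilon+\zb_0)=\pm(\zg\za z+\zg\zb+\epsilon)+2\zl_0.
\]
Matching slopes gives $\za_0=\pm\za$, which is statement~(1). Matching constants, and rewriting via the identities $(1-\za)=2-(\za+1)$ and $(\za-1)=(\za+1)-2$, puts $\zb_0-\zg\zb$ (if $\zs$ is orientation-preserving) or $\zb_0+\zg\zb$ (if $\zs$ is orientation-reversing) into the form $(\za+1)\zL_0+2\zL_0$. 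In the reversing case, replacing $\zg$ by $-\zg$---still valid in statement~(3) because $-\zL_0=\zL_0$---rewrites the relation in the form required by statement~(2).

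For the converse, statements~(1) and~(3) give $\za_0\zL_0\subseteq\zL_0$, and statement~(2) combined with $\zg\zb\in\zL_0$ forces $\zb_0\in\zL_0$; these facts suffice to show that $\widetilde{f}_0$ normalizes $\zG_0$ and hence descends through $\wp_0$ to an analytic self-map $f_0\co\widehat{\bC}\to\widehat{\bC}$. To construct the conjugacy, write $\zd=(\za+1)\zm+2\zn$ with $\zm,\zn\in\zL_0$ and set $\widetilde{h}(z)=\zg z+\epsilon$ with $\epsilon=\mp\zm$, the sign dictated by statement~(1). A direct check shows that $\widetilde{h}$ conjugates $\zG$ into $\zG_0$ and so descends to a Möbius transformation $h$, and that the same algebraic identities applied in reverse give $\widetilde{h}\circ\widetilde{f}=\zs\circ\widetilde{f}_0\circ\widetilde{h}$ for a suitable $\zs\in\zG_0$; projecting through $\wp_0$ yields $h\circ f=f_0\circ h$, so $f_0$ is analytically conjugate to the Latt\`es map $f$ and is therefore itself Latt\`es.

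The main obstacle is careful bookkeeping of the two sign ambiguities: whether $\zs$ is orientation-preserving or reversing, reflecting $\za_0=\pm\za$, and the freedom to replace $\zg$ by $-\zg$ in statement~(3). These choices must be coordinated compatibly in both directions of the equivalence, and it is precisely the identities $(1\mp\za)\zL_0\subseteq(\za+1)\zL_0+2\zL_0$ that absorb the sign ambiguities into the single set $(\za+1)\zL_0+2\zL_0$, producing the clean formulation of statement~(2).
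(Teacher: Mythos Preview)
Your proof is correct and follows essentially the same approach as the paper: lift the conjugating M\"obius transformation to an affine map, compare compositions, and read off (1)--(3); for the converse, build the affine lift explicitly and verify it descends. Your presentation differs only in minor ways---you justify the affinity of $\widetilde{h}$ via Liouville applied to $|\widetilde{h}'|$ (the paper simply asserts the form of the lift by analogy with the Douady--Hubbard argument for $\widetilde{f}$), you compare $\widetilde{f}_0\circ\widetilde{h}$ with $\widetilde{h}\circ\widetilde{f}$ where the paper compares $\widetilde{f}_0$ with $\widetilde{\sigma}\circ\widetilde{f}\circ\widetilde{\sigma}^{-1}$, and in the converse you give an explicit $\epsilon=\mp\mu$ where the paper leaves the construction of $\widetilde{\sigma}$ as ``straightforward''.
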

  \begin{proof} Let $\zs$ be a M\"{o}bius transformation such that
$f_0=\zs\circ f\circ \zs^{-1}$.  Just as for $f$ and $f_0$, the map
$\zs\co \widehat{\bC}\to \widehat{\bC}$ lifts to a map
$\widetilde{\zs}\co \bC\to \bC$ given by $\widetilde{\zs}(z)=\zr
z+\zn$ for some $\zr\in \bC^\times $ and $\zn\in \bC$.  Since $\zs$
maps the postcritical set of $f$ to the postcritical set of $f_0$,
$\widetilde{\zs}$ maps $\zL$ to $\zL_0$, that is, $\zr
\zL+\zn=\zL_0$.  In other words, the coset $\zr  \zL+\zn$ of the group
$\zr \zL$ equals the group $\zL_0$.  The only way that a coset can be
a group is if it is the trivial coset, and so $\zn\in \zL_0$ and
$\zL_0=\zr \zL$.  Since $\widetilde{f}_0$ and $\widetilde{\zs}\circ
\widetilde{f}\circ \widetilde{\zs}^{-1}$ are both lifts of $f_0$,
they differ by an element of $\zG_0$, the group of deck
transformations of $\wp_0$.  In other words,
there exists $\zl_0\in \zL_0$ such that $\pm
\widetilde{f}_0(z)+2\zl_0=\widetilde{\zs}\circ \widetilde{f}\circ
\widetilde{\zs}^{-1}(z)$ for all $z\in \bC$.  So we have the following.
  \begin{equation*}
\begin{aligned}
\pm (\za_0z+\zb_0)+2\zl_0 & =\pm
\widetilde{f}_0(z)+2\zl_0=\widetilde{\zs}\circ \widetilde{f}\circ
\widetilde{\zs}^{-1}(z)
=\zr \widetilde{f}(\zr^{-1}(z-\zn))+\zn \\
 & =\zr(\za \zr^{-1}(z-\zn)+\zb)+\zn=\za z+\zr \zb+(1-\za)\zn
\end{aligned}
  \end{equation*}
Hence $\za_0=\pm \za$, which yields statement 1.  Furthermore
  \begin{equation*}
\pm \zb_0=\zr \zb+(1-\za)\zn-2\zl_0.
  \end{equation*}
We have seen that $\zn\in \zL_0$.  So setting $\zg=\pm \zr$ and
$\zd=\pm((\za+1)\zn-2\za \zn-2\zl_0)$, we have that $\zb_0=\zg
\zb+\zd$ with $\zg\in \bC^\times $ and $\zd\in (\za+1)\zL_0+2\zL_0$.
We now have verified statements 1, 2, and 3.

For the converse, it is a straightforward matter to construct
$\widetilde{\zs}$ such that $\widetilde{\zs}$ conjugates
$\widetilde{f}$ to $\widetilde{f}_0$ up to the action of $\zG_0$.  One
checks that $\widetilde{\zs}$ descends to a rational map $\zs\co
\widehat{\bC}\to \widehat{\bC}$ which has local degree 1 at every
point of $\widehat{\bC}$.  So $\zs$ is a M\"{o}bius transformation.
It follows that $\widetilde{f}_0$ is the lift of an analytic conjugate
of $f$.

This completes the proof of Lemma~\ref{lemma:unqss}.

\end{proof}

Lemma~\ref{lemma:unqss} implies that with an appropriate modification
of $\zb$ the lattice $\zL$ may be replaced by $\zg \zL$, where $\zg$
is any nonzero complex number, without changing the analytic conjugacy
class of $f$.  In Section 7 of Chapter 2 of the number theory book
\cite{BS} by Borevich and Shafarevich the lattices $\zL$ and $\zg \zL
$ are said to be similar.  Theorem 9 and the remark following it in
Section 7 of Chapter 2 of \cite{BS} imply that every lattice in $\bC$
is similar to a lattice with a $\bZ$-basis consisting of 1 and $\zt$,
where $\zt$ lies in the standard fundamental domain for the action of
$\text{SL}(2,\bZ)$ on the upper half complex plane.  More precisely,
$\zt$ satisfies the following inequalities.
  \begin{equation*}\linnum\label{lin:fundom}
\begin{gathered}
\text{Im}(\zt)>0 \\
-\frac{1}{2}<\text{Re}(\zt)\le \frac{1}{2}\\
|\zt|\ge 1\,\text{ and if }|\zt|= 1,\text{ then }\text{Re}(\zt)\ge 0
\end{gathered}
  \end{equation*}
Moreover there is only one such $\zt$ which satisfies these
inequalities.  This and Lemma~\ref{lemma:unqss} imply that $\zt$ is
uniquely determined by the analytic conjugacy class of $f$.

\begin{cor}\label{cor:unqss}  Let
$\widetilde{f}_0(z)=\za_0z+\zb_0$ be a linear polynomial such that
$\widetilde{f}_0(\zL)\subseteq \zL$.  Then $\widetilde{f}_0$ descends
via the branched cover $\wp\co \bC\to \widehat{\bC}$ to a rational
function which is analytically conjugate to $f$ if and only if
$\za_0=\pm \za$ and $\zb_0=\zg \zb +\zd$ where $\zg \zL=\zL$,
$\zg=e^{\pm 2 \zp i/n}$ with $n\in \{1,2,3,4,6\}$, and $\zd\in
(\za+1)\zL+2\zL$.
\end{cor}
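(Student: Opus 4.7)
The plan is to deduce the corollary from Lemma~\ref{lemma:unqss} by specializing it to $\wp_0=\wp$ and $\zL_0=\zL$; the only genuinely new ingredient is the identification of those $\zg\in\bC^\times$ with $\zg\zL=\zL$. First observe that the hypothesis $\widetilde{f}_0(\zL)\subseteq\zL$ makes $\widetilde{f}_0$ equivariant under the deck group $\zG$ of $\wp$, so $\widetilde{f}_0$ descends via $\wp$ to a rational map $f_0\co\widehat{\bC}\to\widehat{\bC}$; the corollary asks precisely when this $f_0$ is analytically conjugate to $f$.

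For the forward direction, assume that $f_0$ is analytically conjugate to $f$ and apply Lemma~\ref{lemma:unqss} with $\wp_0=\wp$. Statement~1 immediately gives $\za_0=\pm\za$; Statement~2 gives $\zb_0=\zg\zb+\zd$ with $\zg\in\bC^\times$ and $\zd\in(\za+1)\zL+2\zL$; and Statement~3 becomes $\zL=\zg\zL$. What remains is to show that any $\zg\in\bC^\times$ with $\zg\zL=\zL$ has the form $e^{\pm 2\zp i/n}$ for some $n\in\{1,2,3,4,6\}$, which is the classical crystallographic restriction.

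To verify that restriction, fix a $\bZ$-basis of $\zL$ and let $M$ be the matrix of multiplication by $\zg$ in this basis. Since $\zg\zL=\zL$, multiplication by $\zg$ is a $\bZ$-linear automorphism of $\zL$, so $M\in\text{GL}(2,\bZ)$; but $\det M=|\zg|^2>0$, forcing $\det M=1$ and $|\zg|=1$. The eigenvalues of $M$, viewed as a real $2\times 2$ matrix representing complex multiplication by $\zg$, are $\zg$ and $\bar\zg$, so $\text{tr}(M)=2\,\text{Re}(\zg)\in\bZ$. Combined with $|\text{Re}(\zg)|\le 1$, the trace lies in $\{-2,-1,0,1,2\}$, and writing $\zg=e^{i\theta}$ gives $\cos\theta\in\{-1,-1/2,0,1/2,1\}$, which enumerates exactly the values $\zg=e^{\pm 2\zp i/n}$ with $n\in\{1,2,3,4,6\}$.

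For the converse, the listed conditions are exactly the hypotheses of the converse half of Lemma~\ref{lemma:unqss} when taken with $\wp_0=\wp$ and $\zL_0=\zL$: the required Statement~3, $\zL_0=\zg\zL$, reduces to the given $\zg\zL=\zL$, while Statements~1 and~2 are included verbatim among the hypotheses. The lemma then concludes that $\widetilde{f}_0$ is the lift of a Latt\`es map analytically conjugate to $f$. The main obstacle in the whole argument is the crystallographic restriction step in paragraph three; once that is in hand, the rest is direct invocation of Lemma~\ref{lemma:unqss}.
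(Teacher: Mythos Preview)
Your proof is correct and follows essentially the same approach as the paper: specialize Lemma~\ref{lemma:unqss} to $\wp_0=\wp$ and $\zL_0=\zL$, and then classify the $\zg\in\bC^\times$ with $\zg\zL=\zL$. The only difference is in that classification step---the paper observes that such a $\zg$ is an imaginary quadratic algebraic integer which is a unit, hence one of the listed roots of unity, whereas you give the equivalent crystallographic-restriction argument via the trace and determinant of the integer matrix representing multiplication by $\zg$; both are standard and yield the same conclusion.
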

  \begin{proof} In this situation $\zL_0=\zL$.  So just as for
$\widetilde{f}$, the containment $\zg \zL\subseteq \zL$ implies that
the complex number $\zg$ is in fact an imaginary quadratic algebraic
integer.  Because $\zg \zL=\zL$, $\zg$ is invertible, that is, it is a
unit.  But all imaginary quadratic units have the form $e^{\pm 2 \zp
i/n}$ with $n\in \{1,2,3,4,6\}$.  This discussion and
Lemma~\ref{lemma:unqss} prove Corollary~\ref{cor:unqss}.

\end{proof}

In this paragraph we consider related effects of complex conjugation.
It is easy to see that the complex conjugate of a Latt\`{e}s map is
also a Latt\`{e}s map.  By applying complex conjugation to the
Latt\`{e}s map $f$, the branched cover $\wp$, and the lift
$\widetilde{f}$ of $f$, we see that the complex conjugate of
$\widetilde{f}$ is a lift of the complex conjugate of $f$.  With
respect to finite subdivision rules, the behavior of $f$ is the same
as the behavior of $\overline{f}$, so when considering
$\widetilde{f}(z)=\za z+\zb$, we may assume that $\text{Im}(\za)\ge
0$.  Since $\widetilde{f}$ and $-\widetilde{f}$ both lift $f$, we may
also assume that $\text{Re}(\za)\ge 0$.  This shows that the
restrictions put on $\za$ in the following lemma are reasonable.

\begin{lemma}\label{lemma:mx} As above, let $\zL$ be the inverse image
in $\bC$ of the postcritical set of the Latt\`{e}s map $f\co
\widehat{\bC}\to \widehat{\bC}$, and let $\widetilde{f}(z)=\za z+\zb$
be a lift of $f$.  Suppose that 1 and $\zt$ form a $\bZ$-basis of
$\zL$.  Multiplication by $\za$ determines an endomorphism of $\zL$.
Let $\left[\begin{matrix}a & b \\ c & d \end{matrix}\right]$ be the
matrix of this endomorphism with respect to the ordered $\bZ$-basis
$(1,\zt)$.  Suppose that $\text{Im}(\za)>0$.  Then $\text{Re}(\za)\ge
0$ and $\zt$ lies in the standard fundamental domain for the action of
$\text{SL}(2,\bZ)$ on the upper half complex plane if and only if the
following inequalities are satisfied.
  \begin{equation*}
\begin{gathered}
c>0 \\
a\ge -\frac{c}{2}\\
\max\{a-c+1,-a\}\le d\le a+c\\
b\le -c\,\text{ and if }\,b=-c,\text{ then }d\ge a
\end{gathered}
  \end{equation*}

\end{lemma}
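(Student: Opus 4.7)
The plan is a direct computation from the matrix representation. Reading off the columns, the matrix identity $\za \cdot 1 = a + c\zt$ and $\za \cdot \zt = b + d\zt$ gives two key pieces of information. First, $\za = a + c\zt$, so $\text{Im}(\za) = c\,\text{Im}(\zt)$; under the standing hypothesis $\text{Im}(\za) > 0$, this makes the conditions $c > 0$ and $\text{Im}(\zt) > 0$ equivalent, which accounts simultaneously for the first inequality and for the positive-imaginary-part half of the fundamental domain condition. Second, substituting $\za = a + c\zt$ into $\za\zt = b + d\zt$ produces the quadratic
\begin{equation*}
c\zt^2 + (a-d)\zt - b = 0
\end{equation*}
with integer coefficients.

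Once $c > 0$ and $\text{Im}(\zt) > 0$ are in force, this quadratic has real coefficients and a non-real root, so its two roots are $\zt$ and $\bar\zt$. Vieta's formulas then give $2\,\text{Re}(\zt) = (d-a)/c$ and $|\zt|^2 = -b/c$. Plugging the first back into $\za = a + c\zt$ yields $\text{Re}(\za) = (a+d)/2$. From here the proof is purely bookkeeping: translate each fundamental-domain condition into an inequality on $a,b,c,d$. The bound $|\zt|\ge 1$ becomes $-b/c\ge 1$, i.e.\ $b\le -c$, and the boundary clause ``if $|\zt|=1$ then $\text{Re}(\zt)\ge 0$'' becomes ``if $b=-c$ then $d\ge a$''. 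The bound $\text{Re}(\zt)\le 1/2$ rewrites as $d\le a+c$, and $\text{Re}(\zt)>-1/2$ becomes $d>a-c$, which, because $d,a,c$ are integers, is equivalent to $d\ge a-c+1$. Finally, $\text{Re}(\za)\ge 0$ becomes $d\ge -a$.

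Combining these last two lower bounds for $d$ yields $\max\{a-c+1,-a\}\le d\le a+c$, and in particular $-a\le a+c$, which is the condition $a\ge -c/2$ listed separately in the statement; so that inequality is a consequence of the others and need only be noted. Every step above is reversible, so the converse implication follows without additional work. There is no genuine obstacle in the argument; the only mildly delicate point is the integrality step that converts the strict inequality $\text{Re}(\zt) > -1/2$ into the non-strict $d\ge a-c+1$, and the rest is straightforward Vieta bookkeeping.
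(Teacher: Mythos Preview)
Your proof is correct and follows essentially the same route as the paper's: both arguments first extract the identities $\text{Re}(\zt)=(d-a)/(2c)$, $|\zt|^2=-b/c$, and $\text{Re}(\za)=(a+d)/2$, and then translate the fundamental-domain inequalities one by one. The only cosmetic difference is that you obtain these identities via Vieta's formulas applied to the quadratic $c\zt^2+(a-d)\zt-b=0$, whereas the paper reads off the trace and determinant of the matrix $\left[\begin{smallmatrix}0&b\\c&d-a\end{smallmatrix}\right]$ representing multiplication by $\za-a$; these are two phrasings of the same computation. Your observation that $a\ge -c/2$ is already forced by $-a\le d\le a+c$ is a small bonus the paper does not make explicit.
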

  \begin{proof} From the first column of the matrix
$\left[\begin{matrix}a & b \\ c & d \end{matrix}\right]$ it follows
that $\za=a+c \zt$.  So $\zt=\frac{\za-a}{c}$.  The matrix of the
endomorphism determined by $\za-a$ is $\left[\begin{matrix}0 & b \\ c
& d-a \end{matrix}\right]$.  Because the eigenvalues of this matrix
are $\za-a$ and $\overline{\za-a}$, its trace is twice the real part
of $\za-a$ and its determinant is the square of the modulus of
$\za-a$.  So $\text{Re}(\za-a)=\frac{d-a}{2}$ and $|\za-a|^2=-bc$.
Hence $\text{Re}(\zt)=\frac{d-a}{2c}$ and $|\zt|^2=-\frac{b}{c}$.
Similarly $\text{Re}(\za)=\frac{a+d}{2}$.

Suppose that $\text{Re}(\za)\ge 0$ and that the inequalities in
line~\ref{lin:fundom} hold.  Since $\zt=\frac{\za-a}{c}$ and
$\text{Im}(\za)>0$, the inequality $\text{Im}(\zt)>0$ implies that
$c>0$, giving the first inequality in the statement of the lemma.  For
the second inequality, we combine $\text{Re}(\za)\ge 0$ and
$\text{Re}(\zt)\le \frac{1}{2}$ to obtain $a+d\ge 0$ and $d-a\le c$,
hence $a-d\ge -c$.  So $a\ge -\frac{c}{2}$, giving the second
inequality in the statement of the lemma.  Combining
$-\frac{1}{2}<\text{Re}(\zt)\le \frac{1}{2}$ and $\text{Re}(\za)\ge 0$
obtains $-c<d-a\le c$ and $a+d\ge 0$, which easily gives the third
inequality in the statement of the lemma.  The fourth inequality
follows from the fact that $|\zt|\ge 1$ with equality only if
$\text{Re}(\zt)\ge 0$.

Proving the converse is straightforward.

This proves Lemma~\ref{lemma:mx}.

\end{proof}

\begin{lemma}\label{lemma:beta}  (1) In
Corollary~\ref{cor:unqss} the case $\zg=\pm i$ occurs only when
$\zt=i$, and the case $\zg=\pm e^{\pm 2\zp i/3}$ occurs only when
$\zt=e^{2\zp i/6}$.\\
\medskip(2) Let $\left[\begin{matrix}a & b \\ c & d
\end{matrix}\right]$ be as in Lemma~\ref{lemma:mx}, and let $M$ be the
reduction of $\left[\begin{matrix}a+1 & b \\ c & d+1
\end{matrix}\right]$ modulo 2.  Then a complete list of
distinct coset representatives of $(\za+1)\zL+2\zL$ in $\zL$ is given by
  \begin{equation*}
\begin{gathered}
0\text{ if }\text{rank}(M)=2 \\
0,1,\zt,\zt+1\text{ if }\text{rank}(M)=0\\
0,\zl\text{ if }\text{rank}(M)=1,
\end{gathered}
  \end{equation*}
where $\zl$ is any element of $\zL$ whose image in $\zL/2\zL$ is not
in the column space of $M$.
\end{lemma}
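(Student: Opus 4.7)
The plan is to handle the two parts separately; both reduce to elementary computations once the right algebraic structure is identified.

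For Part (1), the hypotheses of Corollary~\ref{cor:unqss} give $\zg \in \bC^\times$ with $\zg\zL=\zL$ and $\zg=e^{\pm 2\zp i/n}$ for some $n\in\{1,2,3,4,6\}$. Since $1\in\zL$, both $\zg$ and $\zg^{-1}$ lie in $\zL=\bZ+\bZ\zt$, so multiplication by $\zg$ is represented with respect to the basis $(1,\zt)$ by an integer matrix of determinant $\pm 1$. When $\zg=\pm i$, this matrix has order $4$ in $\text{GL}_2(\bZ)$; a lattice admitting an order-$4$ complex-multiplicative automorphism must be similar to the Gaussian lattice, and combining this with the normalization~\ref{lin:fundom} forces $\zt=i$. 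When $\zg=\pm e^{\pm 2\zp i/3}$, the matrix has order $3$ or $6$, forcing $\zL$ to be similar to the Eisenstein lattice, and~\ref{lin:fundom} then singles out $\zt=e^{2\zp i/6}$. The details amount to writing out the minimal polynomial constraints on the integer matrix representing $\zg$, extracting an equation for $\zt$, and checking which solution lies in the fundamental domain.

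For Part (2), the plan is to pass to the quotient $\zL/2\zL$, which is an $\bF_2$-vector space of dimension $2$ with basis the images of $1$ and $\zt$. The subgroup $(\za+1)\zL+2\zL$ contains $2\zL$, and modulo $2\zL$ its image is exactly the $\bF_2$-span of $(\za+1)\cdot 1\equiv(a+1)+c\zt$ and $(\za+1)\cdot\zt\equiv b+(d+1)\zt$, which is precisely the column space of $M$. Hence $\zL/((\za+1)\zL+2\zL)\cong \bF_2^2/\mathrm{col}(M)$, whose order is $2^{2-\mathrm{rank}(M)}$. Reading off coset representatives in each of the three cases $\text{rank}(M)=2,1,0$ is then immediate: one gets the trivial quotient, a $\bZ/2\bZ$-quotient whose nontrivial coset is represented by any $\zl$ with image outside $\mathrm{col}(M)$, or all of $\bF_2^2$ with representatives $0,1,\zt,1+\zt$.

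The only nontrivial step is the case analysis in Part (1), namely verifying that the integer-matrix constraints combined with~\ref{lin:fundom} pin $\zt$ down uniquely for each choice of root of unity; this is a short but slightly finicky check. Part (2) is routine linear algebra over $\bF_2$.
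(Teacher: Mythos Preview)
Your proposal is correct. Part~(2) is exactly what the paper has in mind when it says ``Statement~2 is clear'': the quotient $\zL/((\za+1)\zL+2\zL)$ is $\bF_2^2$ modulo the column space of $M$, and you spell this out cleanly.

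For Part~(1), your argument works but is more elaborate than the paper's. The paper's proof is a one-liner: since $1\in\zL$ and $\zg\zL\subseteq\zL$, we have $\zg\in\zL$; hence if $\zg=\pm i$ then $i\in\zL=\bZ+\bZ\zt$, and the fundamental-domain inequalities~\ref{lin:fundom} applied directly to the equation $i=m+n\zt$ force $n=1$, $m=0$, $\zt=i$. The case $\zg=\pm e^{\pm 2\zp i/3}$ is handled identically. You instead pass through the order of the integer matrix representing $\zg$ and invoke the classification of lattices with complex multiplication of order $4$ or $6$. This is valid and conceptually illuminating, but it imports more machinery than needed: the paper's approach avoids any matrix computation or appeal to lattice classification, using only that a specific root of unity lies in $\bZ+\bZ\zt$ together with the explicit bounds on $\zt$. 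Your route would be preferable if one wanted to generalize to other orbifold groups; the paper's is shorter for the case at hand.
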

  \begin{proof} If $\zg=\pm i$, then $i\in \zL$ because $\zg
\zL\subseteq \zL$.  But then $i$ is an integral linear combination of
1 and $\zt$ with $\zt$ in the standard fundamental domain for the
action of $\text{SL}(2,\bZ)$ on the upper half complex plane.  This
implies that $\zt=i$.  A similar argument applies when $\zg=\pm e^{\pm
2\zp i/3}$.  This proves statement 1.

Statement 2 is clear.

\end{proof}

Since the elements of the orbifold fundamental group are Euclidean
isometries, $\widetilde{f}$ multiplies areas uniformly by the factor
$\deg(f)$.  Translation by $\zb$ does not change areas.
Multiplication by $\za$ multiplies lengths by $\left|\za\right|$ and
areas by $\left|\za\right|^2=\za \overline{\za}$.  Multiplication by
$\za$ also corresponds to multiplication by the matrix
$\left[\begin{matrix}a & b \\ c & d
\end{matrix}\right]$, and this multiplies areas by its determinant.
Therefore $\deg(f)=\za \overline{\za}=ad-bc$.

\begin{lemma}\label{lemma:cbound} If $a$, $b$, $c$, and $d$ satisfy the
inequalities of Lemma~\ref{lemma:mx}, then $ad-bc\ge 3c^2/4$.
\end{lemma}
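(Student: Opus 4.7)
The plan is to prove this by an elegant geometric observation rather than a direct inequality manipulation on $a,b,c,d$. The key is to combine two facts already in hand. First, the paragraph immediately preceding the lemma established the identity
\[
ad - bc = \deg(f) = \alpha\overline{\alpha} = |\alpha|^2.
\]
Second, from the first line of the proof of Lemma~\ref{lemma:mx} we have $\alpha = a + c\tau$, where the inequalities of Lemma~\ref{lemma:mx} are equivalent to $\tau$ lying in the standard fundamental domain for $\mathrm{SL}(2,\bZ)$ and $\mathrm{Re}(\alpha) \ge 0$. So my plan is simply to bound $|\alpha|^2$ from below by its imaginary part squared and then use the fundamental-domain inequalities on $\tau$.

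Concretely, since $a \in \bR$ and $c > 0$, the relation $\alpha = a + c\tau$ yields $\mathrm{Im}(\alpha) = c\,\mathrm{Im}(\tau)$, and therefore
\[
ad - bc = |\alpha|^2 \ge \bigl(\mathrm{Im}(\alpha)\bigr)^2 = c^2\bigl(\mathrm{Im}(\tau)\bigr)^2.
\]
The fundamental-domain conditions $|\tau| \ge 1$ and $|\mathrm{Re}(\tau)| \le \tfrac{1}{2}$ (which follow from the inequalities of Lemma~\ref{lemma:mx} via the equivalence proved there) give
\[
\bigl(\mathrm{Im}(\tau)\bigr)^2 = |\tau|^2 - \bigl(\mathrm{Re}(\tau)\bigr)^2 \ge 1 - \tfrac{1}{4} = \tfrac{3}{4}.
\]
Substituting this back yields $ad - bc \ge 3c^2/4$, as required.

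There is no real obstacle here: the bound $3/4$ is exactly the squared imaginary part of the corner $e^{2\pi i/6}$ of the fundamental domain, so the constant in the lemma is sharp and is achieved precisely when $\tau = e^{2\pi i/6}$ and $a = 0$. The only thing to be careful about is to verify that the inequalities in the statement of Lemma~\ref{lemma:cbound} really do imply the fundamental-domain conditions on $\tau$ (so that we may apply the clean lower bound $(\mathrm{Im}\,\tau)^2 \ge 3/4$); but this is exactly the content of the equivalence established in Lemma~\ref{lemma:mx}, so no further work is needed. A brute-force case analysis on the signs of $a$ and $d$ using $b \le -c$ and $-c/2 \le a$ would also work, but the route through $|\alpha|^2$ is considerably shorter and conceptually transparent.
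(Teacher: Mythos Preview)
Your proof is correct. The paper, however, argues directly with the integers: from the inequalities it extracts $|a-d|\le c$, whence $c^2\ge (a-d)^2\ge (a-d)^2-(a+d)^2=-4ad$, so $ad\ge -c^2/4$; together with $-bc\ge c^2$ (from $b\le -c$ and $c>0$) this gives $ad-bc\ge 3c^2/4$. Your geometric route and the paper's algebraic one are really the same computation in different clothing: discarding $(\mathrm{Re}\,\alpha)^2$ from $|\alpha|^2$ is exactly discarding $(a+d)^2/4$, and your bound $(\mathrm{Im}\,\tau)^2\ge 3/4$ unpacks via $|\tau|^2=-b/c$ and $\mathrm{Re}(\tau)=(d-a)/(2c)$ to the two ingredients $-b/c\ge 1$ and $(d-a)^2\le c^2$ that the paper uses. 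Your packaging has the virtue of explaining the constant $3/4$ as $(\mathrm{Im}\,e^{i\pi/3})^2$; the paper's version has the minor advantage of using only the displayed inequalities on $a,b,c,d$, without invoking the equivalence with the fundamental-domain conditions or the identity $ad-bc=|\alpha|^2$. One small correction: the alternative you dismiss as a ``brute-force case analysis'' is in fact the paper's three-line argument, not a case split at all.
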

  \begin{proof} The inequalities of Lemma~\ref{lemma:mx} imply that
$\left|a-d\right|\le c$.  Hence $c^2\ge (a-d)^2\ge
(a-d)^2-(a+d)^2=-4ad$.  Since $b\le -c$ and $c>0$, it follows that
$ad-bc\ge -c^2/4+c^2=3c^2/4$, as desired.

\end{proof}

Let $f$ be a Latt\`{e}s map with lift $\widetilde{f}(z)=\za z+\zb$ and
lattice $\zL$ as above.  We say that $f$ is \textbf{nonrigid} if
$\za\in \bR$, equivalently, $\za\in \bZ$.  We say that $f$ is
\textbf{rigid} if $\za\notin \bR$.  If $f$ is nonrigid, then since
multiplication by an integer stabilizes every lattice in $\bC$, $\zL$
can be arbitrary.  There are uncountably many analytic conjugacy
classes of Latt\`{e}s maps for every integer $\za\ge 2$.  On the other
hand, there are only finitely many analytic conjugacy classes of rigid
Latt\`{e}s maps with a given degree.  To see why, first note that
Lemma~\ref{lemma:cbound} and the paragraph before it imply that in the
rigid case a bound on $\deg(f)=ad-bc$ puts a bound on $c$.  We may
assume that $\text{Im}(\za)>0$.  The inequalities of
Lemma~\ref{lemma:mx} imply that a bound on $c$ puts a lower bound on
$a$ and $d$.  Because $b\le -c$, $c>0$ and $\left|a-d\right|\le c$, a
bound on $ad-bc$ puts an upper bound on both $a$ and $d$.  So a bound
on $ad-bc$ puts a bound on $c$, $a$, $d$ and therefore $b$.  So if
$\deg(f)$ is bounded, then there are only finitely many possibilities
for $a$, $b$, $c$ and $d$.  These values determine $\za$ in the upper
half plane and $\zt$.  Given $\za$ and $\zt$, there are always at most
four possibilities for $\zb$ up to equivalence.  So if $f$ is rigid
and $\deg(f)$ is bounded, then there are only finitely many
possibilities for the analytic conjugacy class of $f$.

\section{The pruning lemma}\label{sec:prune}\nosubsections

Suppose that the Latt\`{e}s map $f$ is the subdivision map of a finite
subdivision rule with one tile type.  Then the 1-skeleton of our
subdivision complex is a tree $T$ in $\widehat{\bC}$.  The object of
the next lemma, the pruning lemma, is to prune $T$ in order to
simplify our finite subdivision rule.  Since $f$ is a homeomorphism on
open cells of the subdivision complex, all postcritical points are
vertices of $T$.  The pruning lemma implies that we may assume that
all other vertices of $T$ have valence at least 3.  This lemma
generalizes to more general maps and graphs, but for simplicity we
content ourselves with the following statement.

 \begin{figure}
\centerline{\scalebox{.75}{\includegraphics{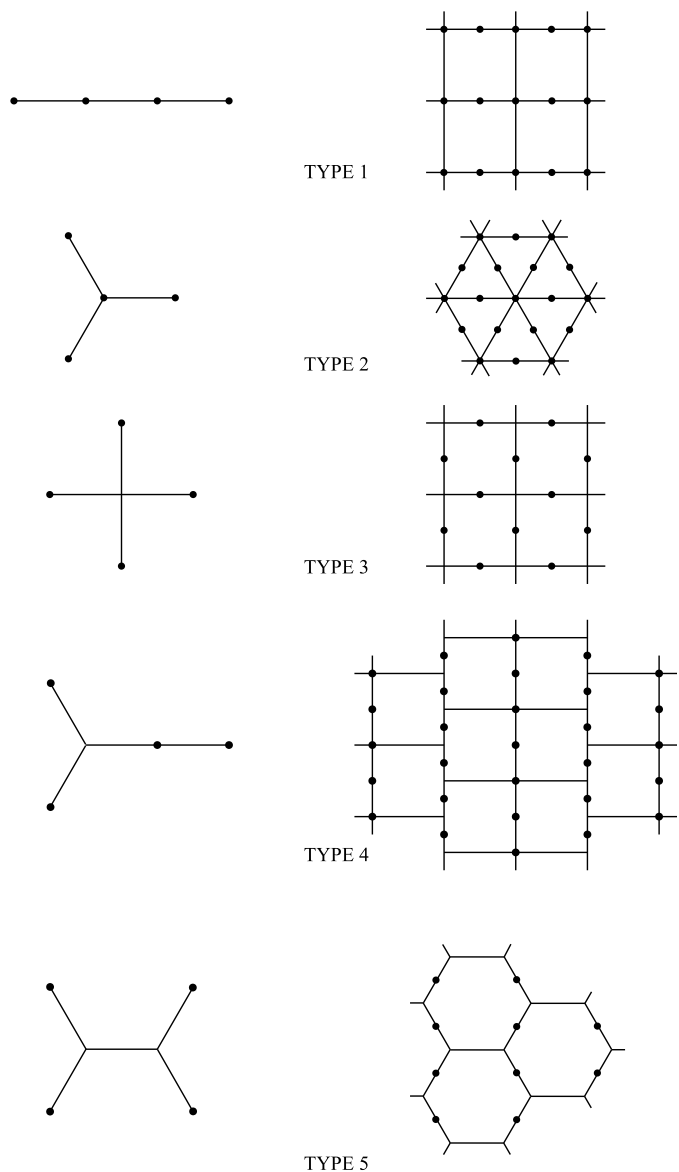}}}
\caption{ The possible trees and tilings.}
\label{fig:types}
  \end{figure}

\begin{lemma}[Pruning Lemma]\label{lemma:prune}  Suppose that the Latt\`{e}s
map $f$ is the subdivision map of a finite subdivision rule with one
tile type.  Then $f$ is the subdivision map of a finite subdivision
rule with one tile type such that every vertex in its subdivision
complex with valence either 1 or 2 is a postcritical point.
\end{lemma}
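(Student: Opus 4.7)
The plan is to simplify the tree $T$ in two rounds: first delete non-postcritical leaves, then smooth out non-postcritical valence-2 vertices. For each move to be compatible with the subdivision rule structure, the pruning must be performed $f$-equivariantly, i.e.\ on an entire backward $f$-orbit at once.

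The key technical observation is a valence comparison. Write $T_1 = f^{-1}(T)$ for the $1$-skeleton of $\cR(\SR)$, so that $T \subseteq T_1$. Suppose $v \in V(T)$ is non-postcritical and $f(u) = v$ for some $u \in V(T)$. Then $u$ is non-postcritical (since $P_f$ is forward invariant) and hence non-critical (since every critical point maps into $P_f$), so $f$ is a local homeomorphism at $u$, and the valence of $u$ in $T_1$ equals the valence of $v$ in $T$. Combined with the trivial inequality (valence of $u$ in $T$) $\le$ (valence of $u$ in $T_1$), one gets (valence of $u$ in $T$) $\le$ (valence of $v$ in $T$). Iterating, every vertex in the backward $f$-orbit of $v$ within $V(T)$ is non-postcritical, of valence in $T$ at most that of $v$.

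Round~1 (leaf removal): let $L_1$ be the set of non-postcritical leaves of $T$. By the valence observation, $L_1$ is closed under taking $f$-preimages inside $V(T)$, so deleting every vertex of $L_1$ together with its unique incident edge gives a subtree $T^*$ with $f(T^*) \subseteq T^*$ — a vertex $u \in V(T^*)$ with $f(u) \in L_1$ would be forced into $L_1$, and an edge-path $f(e')$ for $e' \subseteq T^*$ cannot pass through a leaf of $L_1$ without an endpoint of $e'$ landing in $L_1$. The complex with $1$-skeleton $T^*$ and subdivision $f^{-1}(T^*)$ realizes $f$ as a one-tile-type subdivision map. Round~2 (smoothing): the same analysis applied to $T^*$ and $f^{-1}(T^*)$ shows that the non-postcritical valence-2 vertices $L_2$ of $T^*$ are backward-closed in $V(T^*)$ (a preimage has valence at most $2$, and it cannot be a leaf of $T^*$ because Round~1 removed all non-postcritical leaves). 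Amalgamating the two incident edges at each $v \in L_2$ into a single edge leaves the underlying point set unchanged, so forward invariance is automatic, and the backward closure guarantees that the smoothing descends consistently to every subtile. The resulting tree has every valence-1 or valence-2 vertex postcritical, as required.

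The main obstacle is justifying that ``one tile type'' is preserved. The single-tile-type condition says precisely that $f$ furnishes the combinatorial isomorphism from each subtile onto the big tile, so an $f$-equivariant modification of the big tile's boundary lifts uniformly to all subtiles; this is exactly what forces the pruning of whole backward orbits rather than of individual vertices. Since $|V(T) \setminus P_f|$ is finite and each round strictly decreases it, the two-round process terminates.
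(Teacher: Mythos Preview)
Your approach differs from the paper's and the core idea---the valence comparison via the local homeomorphism at non-critical points---is sound, but there is a gap in Round~2. You claim that a preimage $u \in V(T^*)$ of some $v \in L_2$ ``cannot be a leaf of $T^*$ because Round~1 removed all non-postcritical leaves.'' This is not justified: Round~1 removed the non-postcritical leaves of $T$, but deleting those can create \emph{new} non-postcritical leaves in $T^*$ (for instance, a non-postcritical valence-$2$ vertex of $T$ whose unique neighbor outside the rest of the tree lay in $L_1$ becomes a leaf of $T^*$). If such a new leaf $u$ satisfies $f(u)\in L_2$, then after smoothing $L_2$ the point $f(u)$ is no longer a vertex while $u$ still is, so $f$ fails to send vertices to vertices and you no longer have a finite subdivision rule. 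Your closing sentence about termination hints that you may have intended iteration, but the body of the argument performs each round only once. The repair is easy: iterate Round~1 until no non-postcritical leaf remains (this terminates by finiteness of $V(T)$), and then a single pass of Round~2 works exactly as you wrote, since now every non-postcritical vertex of $T^*$ genuinely has valence at least~$2$.

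The paper sidesteps iteration entirely by passing in one move to the subspace $S\subseteq T$ defined as the union of all arcs joining pairs of postcritical points---the convex hull of $P_f$ in $T$. Forward invariance $f(S)\subseteq S$ is proved directly: an arc between postcritical points maps to a union of arcs between postcritical points, because any fold of $f$ along such an arc occurs at a critical point and therefore lands in $P_f$. One then declares the vertices of $S$ to be $P_f$ together with the branch points of $S$. This $S$ is exactly the stable tree your iterated Round~1 followed by Round~2 would eventually produce, so the two arguments converge to the same object; the paper's route simply gets there without maintaining intermediate finite-subdivision-rule structures along the way.
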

  \begin{proof} Let $T$ be the 1-skeleton of the given finite
subdivision rule.  We temporarily view $T$ just as a topological space
with no regard to vertices or edges.  Let $S$ be the subspace of $T$
which is the union of all arcs joining postcritical points.  Then $S$
is connected and hence is a topological tree.  We next show that
$f(S)\subseteq S$.  For this, let $\zg$ be an arc in $T$ joining two
postcritical points.  If the restriction of $f$ to $\zg$ is injective,
then $f(\zg)$ is an arc joining two postcritical points, and so
$f(\zg)\subseteq S$.  If the restriction of $f$ to $\zg$ is not
injective, then because $f$ maps $\zg$ into the tree $T$ there exists
a point $p\in \zg$ such that $f(\zg)$ is folded at $f(p)$.  But then
$p$ is a critical point and $f(p)$ is a postcritical point.  We see
that in general $f(\zg)$ is a union of arcs which join two
postcritical points.  So $f(S)\subseteq S$.  We make $S$ into a graph
by putting vertices at the postcritical points as well as the points
whose complements have at least three connected components.  We see
that $S$ is the 1-skeleton of a cell structure for the 2-sphere and
that $f$ is a subdivision map for this subdivision complex with one
tile type.  The vertices of $S$ with valence either 1 or 2 are
postcritical points.

This proves Lemma~\ref{lemma:prune}.

\end{proof}

Lemma~\ref{lemma:prune} shows that if the Latt\`{e}s map $f$ is the
subdivision map of a finite subdivision rule with one tile type and
1-skeleton $T$, then we may assume that every vertex of $T$ with
valence either 1 or 2 is a postcritical point.  We refer to the
vertices of $T$ which are not postcritical points as
\textbf{accidental vertices}.  The assumption that accidental vertices
have valence at least 3 severely limits the possibilities for $T$.
There are five of them.  Figure~\ref{fig:types} shows all five
possibilities for $T$.  The tile in $\widehat{\bC}$ which $T$
determines lifts to a tiling of $\bC$, and Figure~\ref{fig:types}
indicates the tiling corresponding to every possibility for $T$.  Dots
in trees indicate postcritical points, and dots in tilings indicate
inverse images of postcritical points.

We emphasize that the illustrations in Figure~\ref{fig:types} are
correct only up to isotopy.  This is true in particular for the
tilings.  The inverse image $\zL$ of the postcritical set of $f$ is a
lattice and the tiling is invariant under the orbifold fundamental
group $\zG$, which imposes a certain structure, but otherwise every
tiling is correct only up to an affine transformation followed by a
$\zG$-invariant isotopy rel $\zL$.

\section{Main Results}\label{sec:largedeg}\nosubsections

\begin{thm}\label{thm:onetilea} Every Latt\`{e}s map with sufficiently
large degree is the subdivision map of a finite subdivision rule with
one tile type, and the tile has the form of type 5 in
Figure~\ref{fig:types}.
\end{thm}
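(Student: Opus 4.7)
The plan is to follow the strategy outlined in the introduction. Fix a lift $\widetilde{f}(z)=\za z+\zb$ of $f$ together with a $\bZ$-basis $(1,\zt)$ of $\zL$ with $\zt$ in the standard fundamental domain. My first step is to choose a standard regular hexagonal tiling $\mathcal{H}$ of $\bC$ whose vertex set contains $\zL$ and is invariant under the orbifold fundamental group $\zG$, arranged so that on passing to the quotient $\widehat{\bC}=\bC/\zG$ one obtains a cell structure with a single hexagonal $2$-cell of type 5 in Figure~\ref{fig:types}, having the four postcritical points among its vertices and accidental vertices of valence $3$. This is a normalization: the tiling $\mathcal{H}$ must be invariant under every rotation $z\mapsto 2\zl-z$ with $\zl\in\zL$, so its symmetry constraints are determined by the geometry of $\zL$.

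Next I would compare the pushforward tiling $\widetilde{f}(\mathcal{H})$ with $\mathcal{H}$. Because $\widetilde{f}$ is affine and $\za\zL\subseteq\zL$, the tiling $\widetilde{f}(\mathcal{H})$ is again $\zG$-invariant and combinatorially hexagonal, but with geometric shape dictated by $\za$. The heart of the argument is to produce a $\zG$-equivariant isotopy $\{\widetilde{h}_t\}_{t\in[0,1]}$ of $\bC$ rel $\zL$ whose terminal homeomorphism $\widetilde{h}$ carries $\widetilde{f}(\mathcal{H})$ onto a subdivision $\mathcal{H}'$ of $\mathcal{H}$. Then $\widetilde{h}\circ\widetilde{f}$ sends $\mathcal{H}$ to $\mathcal{H}'$, and each tile of $\mathcal{H}$ contains roughly $\deg(f)$ hexagons of $\mathcal{H}'$. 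The large-degree hypothesis enters precisely here: Lemma~\ref{lemma:cbound} gives $\deg(f)\ge 3c^2/4$, and when $\deg(f)$ is sufficiently large there is enough room inside each fundamental domain of $\mathcal{H}$ that the combinatorial boundary identifications forced by $\zG$-equivariance can actually be realized, while keeping $\zL$ pointwise fixed.

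The map $\widetilde{h}\circ\widetilde{f}$ then descends via $\wp$ to a branched map of $\widehat{\bC}$ which by construction is the subdivision map $\subm$ of a finite subdivision rule $\cR$ with one tile of type 5. This $\cR$ has bounded valence (valences are inherited from the uniformly bounded local combinatorics of the hexagonal tiling) and mesh tending to zero combinatorially, since $|\za|>1$ forces uniform geometric contraction of the lifted tiles under iteration of $\widetilde{f}^{-1}$. By construction $\subm$ is isotopic to $f$ rel the postcritical set $P_f$. To upgrade this isotopy to a topological conjugacy rel $P_f$, I would invoke the expansion-complex results of \cite{CFP1}: bounded valence together with mesh approaching zero combinatorially, applied to a subdivision map that is isotopic rel $P_f$ to a postcritically finite rational map with no periodic critical points, forces the two maps to be conjugate rel $P_f$. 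This identifies $f$ with $\subm$ and completes the proof.

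The main obstacle is clearly the middle step, the construction of the $\zG$-equivariant isotopy making $\widetilde{f}(\mathcal{H})$ combinatorially a subdivision of $\mathcal{H}$. The delicacy is that the combinatorial matching must be arranged uniformly and $\zG$-equivariantly across all translates of a fundamental domain while keeping $\zL$ pointwise fixed; the large-degree hypothesis is exactly what provides the combinatorial room to carry this out, and it is precisely the failure of this room in the excluded finitely many low-degree conjugacy classes that motivates the phrase \emph{sufficiently large degree} in the statement.
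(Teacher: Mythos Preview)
Your proposal captures the high-level strategy exactly as the paper's introduction sketches it, but it is not yet a proof: the entire substance of the argument lies in the step you describe as ``there is enough room inside each fundamental domain \dots\ that the combinatorial boundary identifications \dots\ can actually be realized,'' and you give no mechanism for carrying this out. In the paper this step occupies almost the whole proof and rests on a concrete device you have not discovered: the \emph{edge path approximation} $\widehat{L}$ of a line $L$, built from edges of the hexagonal tiling $S$ dual to the edges of $S^*$ that $L$ crosses. The fundamental domain $F$ is then assembled from segments of four such approximations $\widehat{L}_1,\widehat{L}_2,\widehat{L}_3,\widehat{L}_4$ of lines through translates of $\zb$, and one must prove (Lemmas~\ref{lemma:approximations}--\ref{lemma:pone}) that for large degree these segments are disjoint and the required intersection points $P_i,Q_i,R_i$ lie in the correct order; this is where the degree hypothesis actually does work, via estimates on when a union of two $S^*$-edges can meet two parallel lines. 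Moreover the cases $\za\in\{\zt,1+\zt,2+\zt\}$ (i.e.\ $c=1$, $a\in\{0,1,2\}$) must be handled separately by explicit constructions, which your outline does not anticipate.

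A second gap is your justification that the mesh approaches $0$ combinatorially. The inequality $|\za|>1$ gives geometric contraction, but the subdivision rule $\cR$ is defined only up to isotopy, so geometric contraction does not directly yield the combinatorial condition. The paper verifies this via an analysis of a directed graph on pairs of disjoint edges (Lemmas~\ref{lemma:cndone} and \ref{lemma:cndtwo}), again relying on the specific structure of the edge-path construction of $F$. Finally, a small setup issue: a genuinely \emph{regular} hexagonal tiling of $\bC$ invariant under a generic $\zG$ does not exist; the paper handles this by working through an auxiliary $\bR$-linear isomorphism $\zj$ to a plane with a regular tiling $T$ and pulling back to obtain the (non-regular) tiling $S$.
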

  \begin{proof} The proof begins by fixing notation and making
definitions.  Then we outline the argument.  Finally we fill in the
details.

Let $f\co \widehat{\bC}\to \widehat{\bC}$ be a Latt\`{e}s map.  Let
$\zL$ be a lift of the postcritical set of $f$, let
$\widetilde{f}(z)=\za z+\zb$ be a lift of $f$, and let $\zG$ be the
orbifold fundamental group as in Section~\ref{sec:defn}.  As in the
paragraph immediately before Lemma~\ref{lemma:mx}, we may assume that
$\text{Re}(\za)\ge 0$ and $\text{Im}(\za)\ge 0$.  As in the paragraph
containing line~\ref{lin:fundom}, we may assume that $\zL$ is a
lattice with a $\bZ$-basis consisting of 1 and $\zt$, where $\zt$
satisfies the inequalities in line~\ref{lin:fundom}; that is, we may
assume that $\zt$ lies in the standard fundamental domain for the
action of $\text{SL}(2,\bZ)$ on the upper half complex plane.

Since 1 and $\zt$ are linearly independent over $\bR$, so are
$\za^{-1}$ and $\za^{-1}\zt$.  So there exists an $\bR$-linear
isomorphism $\zj\co \bC\to \bC$ such that the points of the lattice
$\zj(\za^{-1}\zL)$ are midpoints of edges of a standard tiling $T$ of
the plane by regular hexagons and 0, $\zj(\za^{-1})$, and
$\zj(\za^{-1}\zt)$ lie in one hexagon as shown in
Figure~\ref{fig:hex}.  Let $T^*$ be the tiling of the plane by
equilateral triangles which is dual to $T$.  The vertices of $T^*$ are
the centers of the tiles of $T$.  The duality between $T$ and $T^*$
determines a bijection between the edges of $T$ and the edges of
$T^*$.  Let $S$ and $S^*$ be the tilings of the plane which are the
pullbacks of $T$ and $T^*$ under $\zj$.

  \begin{figure}
\centerline{\scalebox{.75}{\includegraphics{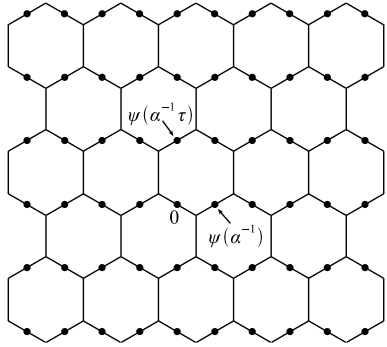}}} \caption{
The tiling $T$.}
\label{fig:hex}
  \end{figure}

In this paragraph we consider approximations to lines in the plane by
edge paths in $S$.  Let $L$ be a line in the plane, and first suppose
that $L$ does not contain a vertex of $S^*$.  We construct a subset
$\widehat{L}$ of the plane as a union of edges of $S$.  An edge $e$ of
$S$ is contained in $\widehat{L}$ if and only if $L$ meets the edge of
$S^*$ dual to $e$.  See Figure~\ref{fig:newx}, which shows part of $S$
and $S^*$.  The dots indicate points of the lattice $\za^{-1}\zL$.
Part of $L$ is shown, and part of $\widehat{L}$ is drawn with thick
line segments.  It is not difficult to see that $\widehat{L}$ is
homeomorphic to $\bR$.  If $L$ contains a vertex but not an edge of
$S^*$, then we construct $\widehat{L}$ in essentially the same way by
perturbing $L$ near every vertex of $S^*$ contained in $L$.  If $L$ is
a union of edges of $S^*$, then we perturb every edge in $L$ to a new
line segment whose endpoints are not vertices of $S^*$.  The result is
again a subset of the 1-skeleton of $S$ which is homeomorphic to
$\bR$, but in this case there are two choices for every vertex of
$S^*$ in $L$ and $\widehat{L}$ is not unique.  We call $\widehat{L}$
the \textbf{edge path approximation} to $L$.

\begin{figure}
\centerline{\scalebox{.75}{\includegraphics{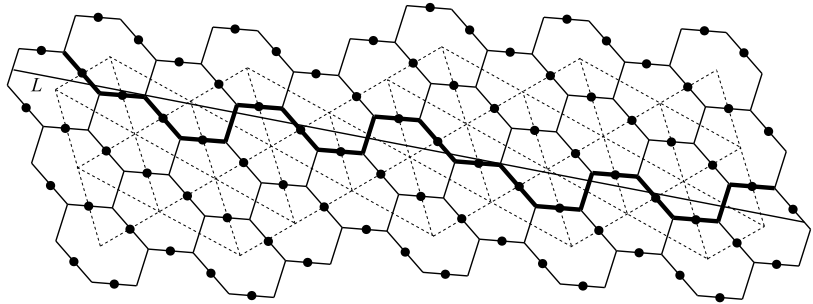}}} \caption{
Constructing the edge path approximation of the line $L$.}
\label{fig:newx}
  \end{figure}

We now outline the proof of Theorem~\ref{thm:onetilea}.  See
Figure~\ref{fig:newfundom}.  Let $L_1$ be the line containing $\zb$
and $1+\zb$.  Since $\zb\in\zL$, either $L_1$ contains the centers of
both tiles of $S$ which contain $\zb$ or $L_1$ does not contain the
center of either tile of $S$ which contains $\zb$.  If $L_1$ does not
contain the centers of the two tiles of $S$ which contain $\zb$, then
$\widehat{L}_1$ contains $\zb$.  If $L_1$ contains the centers of the
two tiles of $S$ which contain $\zb$, then $\widehat{L}_1$ is not
uniquely determined, but we may choose $\widehat{L}_1$ so that it
contains $\zb$.  So we may assume that $\widehat{L}_1$ contains $\zb$,
and likewise $1+\zb$.  Let $L_2$ be the line containing $\zt+\zb$ and
$1+\zt+\zb$.  As for $\widehat{L}_1$, we may assume that
$\widehat{L}_2$ contains both $\zt+\zb$ and $1+\zt+\zb$.  We will
later prove that $\widehat{L}_1$ and $\widehat{L}_2$ are usually
disjoint.  Let $L_3$ be the line containing $-\frac{1}{2}+\zb$ and
$-\frac{1}{2}+\zt+\zb$.  Let $P_1$ and $P_2$ be points of
$\widehat{L}_1\cap \widehat{L}_3$ and $\widehat{L}_2\cap
\widehat{L}_3$, respectively, such that the open segment of
$\widehat{L}_3$ with endpoints $P_1$ and $P_2$ is disjoint from both
$\widehat{L}_1$ and $\widehat{L}_2$.  We will later prove that $P_1$
is usually strictly between $-1+\zb$ and $\zb$ in $\widehat{L}_1$ and
that $P_2$ is usually strictly between $-1+\zt+\zb$ and $\zt+\zb$ in
$\widehat{L}_2$.  Let $Q_1$ be the image of $P_1$ under the rotation
of order 2 about $\zb$, and let $R_1$ be the image of $Q_1$ under the
rotation of order 2 about $1+\zb$.  When $\widehat{L}_1$ is uniquely
determined, the rotation of order 2 about any point of $\zL\cap L_1$
stabilizes $\widehat{L}_1$.  When $\widehat{L}_1$ is not uniquely
determined, we construct $\widehat{L}_1$ by making choices between
$\zb$ and $1+\zb$ and then extending so that the rotation of order 2
about any point of $\zL\cap L_1$ stabilizes $\widehat{L}_1$.  We
construct $\widehat{L}_2$ in the same way.  So $Q_1$ is a point of
$\widehat{L}_1$ usually strictly between $\zb$ and $1+\zb$ and $R_1$
is a point of $\widehat{L}_1$ usually strictly between $1+\zb$ and
$2+\zb$.  The composition of two rotations of order 2 is a
translation.  We translate $L_3$ by the map $z\mapsto z+R_1-P_1$ to a
line $L_4$.  The image of the closed segment of $\widehat{L}_3$ with
endpoints $P_1$ and $P_2$ under this translation is the closed segment
of $\widehat{L}_4$ with endpoints $R_1$ and $R_2$.  We will later
prove that the closed segment of $\widehat{L}_3$ with endpoints $P_1$
and $P_2$ is usually disjoint from its image under this
translation. From the segment of $\widehat{L}_1$ joining $P_1$ and
$R_1$, the segment of $\widehat{L}_4$ joining $R_1$ and $R_2$, the
segment of $\widehat{L}_2$ joining $R_2$ and $P_2$, and the segment of
$\widehat{L}_3$ joining $P_2$ and $P_1$ we obtain the hatched region
$F$, which is a union of tiles of $S$.  Let $S'$ be the tiling gotten
from $S$ simply by making the points of $\zL$ vertices.  So the
vertices of $S$ are the vertices of $S'$ with valence 3, and the
points of $\zL$ are the vertices of $S'$ with valence 2.  Whereas the
tiles of $S$ are hexagons, the tiles of $S'$ are decagons.  Let $s$ be
the tile of $S'$ containing 0, $\za^{-1}$, $\za^{-1}\zt$, and
$\za^{-1}+\za^{-1}\zt$.  There is a canonical pairing of the edges of
$s$.  The two edges of $s$ containing $0$ are interchanged by the
rotation of order 2 about $0$. The same is true at $\za^{-1}$,
$\za^{-1}\zt$, and $\za^{-1}+\za^{-1}\zt$.  The two remaining edges of
$s$ are translates of one another.  In the same way we view $F$ as
having six vertices $P_1$, $Q_1$, $R_1$, $P_2$, $Q_2$, $R_2$ of
valence 3 and four vertices $\zb$, $1+\zb$, $\zt+\zb$, $1+\zt+\zb$ of
valence 2, which we view as decomposing the boundary of $F$ into ten
edges.  The two edges of $F$ containing $\zb$ are interchanged by the
rotation of order 2 about $\zb$.  The same is true at $1+\zb$,
$\zt+\zb$, and $1+\zt+\zb$.  The two remaining edges of $F$ are
translates of one another.  It follows that $F$ is a fundamental
domain for the orbifold fundamental group $\zG$.  The image of every
tile of $S'$ under $\widetilde{f}$ is isotopic rel $\zL$ to the image
of $F$ under some element of $\zG$, and these isotopies can be made
$\zG$-equivariant.  Because $f$ respects the edge pairings of $s$ and
$F$ up to isotopy, when we descend to $\widehat{\bC}$, the tiling of
$F$ by the tiles of $S'$ determines a finite subdivision rule $\cR$
with one tile type.  The subdivision rule $\cR$ has bounded valence,
in fact valences of vertices are bounded by 3.  We will prove that the
mesh of $\cR$ usually approaches 0 combinatorially.  The result is a
finite subdivision rule $\cR$ with bounded valence and mesh
approaching 0 combinatorially such that the subdivision map $\subm$ of
$\cR$ is isotopic to $f$ rel $P_f$.  From here we proceed as in the
proof of the main theorem of \cite{CFP2} to conclude that $\subm$ and
$f$ are in fact conjugate rel $P_f$, and so $f$ is the subdivision map
of a finite subdivision rule with one tile type.

  \begin{figure}
\centerline{\scalebox{.75}{\includegraphics{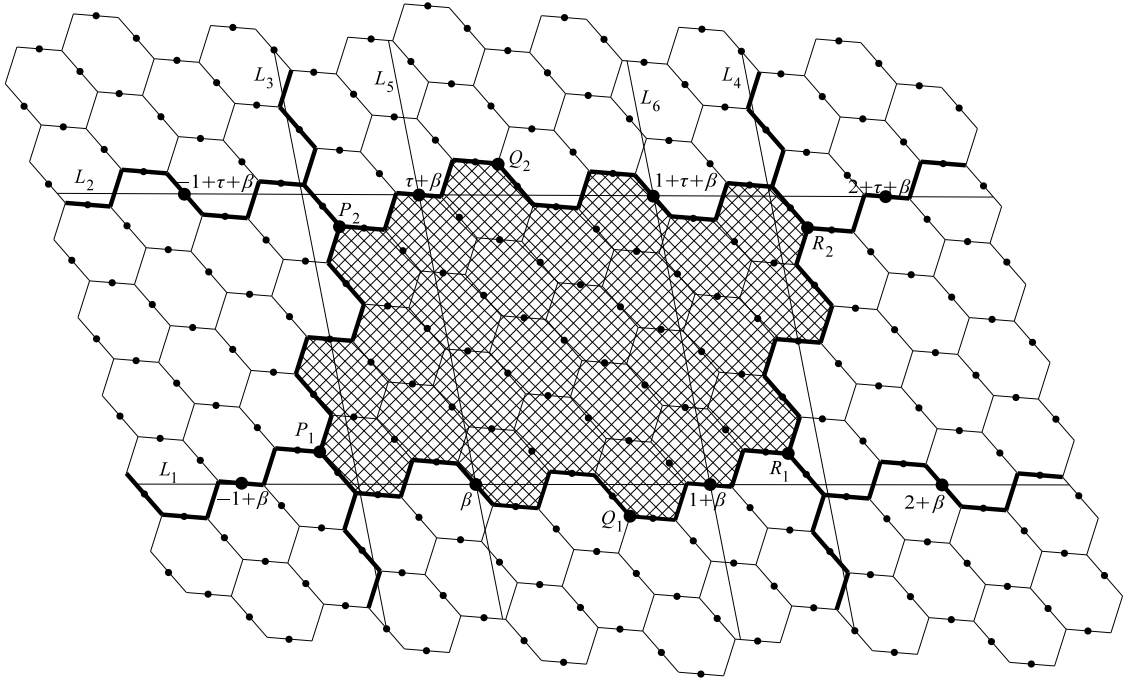}}}
\caption{ Constructing a fundamental domain when
$\za=\frac{9+\sqrt{-23}}{2}$ and $\zt=\frac{-1+\sqrt{-23}}{4}$.}
\label{fig:newfundom}
  \end{figure}

The previous paragraph reduces the proof of Theorem~\ref{thm:onetilea}
to the construction of an appropriate fundamental domain $F$ for the
orbifold fundamental group $\zG$.  This in turn reduces to verifying
certain claims made in the previous paragraph.  The word ``usually''
occurs a few times.  ``Usually'' means that the degree of $f$ is
sufficiently large and that $\za\notin \{\zt,1+\zt,2+\zt\}$.  The
claims in the previous paragraph do not always hold if $\za\in
\{\zt,1+\zt,2+\zt\}$ and sometimes when they do, the proofs below
do not.  The cases in which $\za\in \{\zt,1+\zt,2+\zt\}$ will be
handled separately at the end of the proof.  When edge path
approximations are not unique, the claims mean that it is possible to
choose edge path approximations so that the claims are true.

In this paragraph we make an observation concerning a type of
symmetry.  The outline in the next-to-last paragraph constructs a
fundamental domain $F$ for the Latt\`{e}s map with lift $z\mapsto \za
z+\zb$ and lattice generated by 1 and $\zt$.  The rotation of order 2 about
$1+\zt+\zb$ takes $F$ to the corresponding fundamental domain for the
Latt\`{e}s map with lift $z\mapsto \za z+1+\zt+\zb$ and lattice
generated by 1 and $\zt$.  So for example, by this symmetry if for a
fixed $\za$ and $\zt$ and for every $\zb\in \zL$ no tile of $S'$ meets an
edge of $F$ containing $\zb$ and an edge of $F$ containing $\zt+\zb$,
then for the same fixed $\za$ and $\zt$ and for every $\zb\in\zL$
no tile of $S'$ meets an edge of $F$ containing $1+\zb$ and an edge of $F$
containing $1+\zt+\zb$.  We will use this symmetry to simplify arguments.

We prepare to verify the claims in the outline by turning our
attention to edge path approximations.

\begin{lemma}\label{lemma:approximations} Let $L$ and $L'$ be parallel
lines in the plane.
\begin{enumerate}
  \item If no edge of $S^*$ meets both $L$ and $L'$, then
$\widehat{L}$ and $\widehat{L}'$ are disjoint.
  \item If there does not exist a line segment which is the union of
two adjacent edges of $S^*$ such that this line segment meets both $L$
and $L'$, then no tile of $S'$ meets both $\widehat{L}$ and
$\widehat{L}'$.
\end{enumerate}
\end{lemma}
  \begin{proof} Note that because the vertices of $S$ have valence 3,
if $\widehat{L}$ and $\widehat{L}'$ are not disjoint, then they have
an edge of $S$ in common.  With this observation statement 1 follows
from the definition of edge path approximation.  To prove statement 2,
let $L''$ be the line parallel to both $L$ and $L'$ such that $L''$ is
equidistant to $L$ and $L'$.  Suppose that there does not exist a
line segment which is the union of two adjacent edges of $S^*$ such
that this line segment meets both $L$ and $L'$.  Then no edge of $S^*$
meets both $L$ and $L''$.  Likewise no edge of $S^*$ meets both $L'$
and $L''$.  So statement 1 implies that $\widehat{L}$, $\widehat{L}'$,
and $\widehat{L}''$ are mutually disjoint. Moreover $\widehat{L}''$ is
between $\widehat{L}$ and $\widehat{L}'$.  It follows that no tile of
$S'$ meets both $\widehat{L}$ and $\widehat{L}'$.

This proves Lemma~\ref{lemma:approximations}.

\end{proof}

\begin{lemma}\label{lemma:onetwo} If the degree of $f$ is sufficiently
large, then no tile of $S'$ meets both $\widehat{L}_1$ and
$\widehat{L}_2$.
\end{lemma}
  \begin{proof} We prepare to apply statement 2 of
Lemma~\ref{lemma:approximations}.  We consider whether or not there
exists a line segment which is the union of two adjacent edges of
$S^*$ such that this line segment meets both $L_1$ and $L_2$.  First
suppose that these two edges of $S^*$ are translates of the line
segment joining 0 and $2\za^{-1}$.  Since $\zt+\zb \in L_2$ and $L_1$
is the translate of the real axis by $\zb$, for such edges we are led
to consider real numbers $r$ and $s$ such that $\zt+\zb+2r
\za^{-1}=s+\zb$, that is, $\zt+2r \za^{-1}=s$.  If $\za\in \bR$, then
there are no such numbers $r$ and $s$ and there is no such line
segment which meets both $L_1$ and $L_2$.  Otherwise $r$ and $s$
exist, and if $\left|r\right|>2$, then there does not exist such a
line segment which meets both $L_1$ and $L_2$.  The equation $\zt+2r
\za^{-1}=s$ implies that $\za=r\frac{2}{s-\zt}$.  Since
$\text{Im}(\zt)$ is bounded from 0, the complex number $s-\zt$ is
bounded from 0 uniformly in $\zt$ as $s$ varies over $\bR$.  So
$r\frac{2}{s-\zt}$ is bounded uniformly in $\zt$ for
$\left|r\right|\le 2$ and $s\in \bR$.  Hence $\za$ is bounded for
$\left|r\right|\le 2$ and $s\in \bR$.  But since $\text{deg}(f)=\za
\overline{\za}$, it follows that there is no such line segment meeting
both $L_1$ and $L_2$ if the degree of $f$ is sufficiently large.

In addition to having edges in the direction of $2\za^{-1}$, the
tiling $S^*$ has edges in the direction of $\za^{-1}(1+\zt)$ and
$\za^{-1}(-1+\zt)$.  We next perform analogous verifications for these
edges.  Once these verifications are complete, we may conclude that no
tile of $S'$ meets both $\widehat{L}_1$ and $\widehat{L}_2$ if the
degree of $f$ is sufficiently large.

Now we consider edges of $S^*$ in the direction of $\za^{-1}(1+\zt)$.
In this case we obtain the equation $\zt+r \za^{-1}(1+\zt)=s$.  So
$\za=r\frac{1+\zt}{s-\zt}$.  For all real numbers $s$ we have that
  \begin{equation*}\linnum\label{lin:argtau}
\left|s-\zt\right|\ge
\text{Im}(\zt)=\left|\zt\right|\sin(\arg(\zt))\ge
\frac{\sqrt{3}}{2}\left|\zt\right|,
  \end{equation*}
and so
  \begin{equation*}
\left|\frac{1+\zt}{s-\zt}\right|\le
\frac{2}{\sqrt{3}}\frac{1+\left|\zt\right|}{\left|\zt\right|}\le
\frac{4}{\sqrt{3}}.
  \end{equation*}
So $r\frac{1+\zt}{s-\zt}$ is bounded for $\left|r\right|\le 2$ and
$s\in \bR$.  Again it follows that there is no such line segment
meeting both $L_1$ and $L_2$ if the degree of $f$ is sufficiently
large.

For edges of $S^*$ in the direction of $\za^{-1}(-1+\zt)$ we replace
$1+\zt$ by $-1+\zt$ in the previous paragraph and again conclude that
there is no such line segment meeting both $L_1$ and $L_2$ if the
degree of $f$ is sufficiently large.  Thus if the degree of $f$ is
sufficiently large, then no tile of $S'$ meets both $\widehat{L}_1$
and $\widehat{L}_2$.

This proves Lemma~\ref{lemma:onetwo}.

\end{proof}

As in Figure~\ref{fig:newfundom}, let $L_5$ be the line containing
$\zb$ and $\zt+\zb$ and let $L_6$ be the line containing $1+\zb$ and
$1+\zt+\zb$.  It would be convenient to have a lemma for $L_5$ and
$L_6$ analogous to Lemma~\ref{lemma:onetwo} for $L_1$ and $L_2$.
Unfortunately, the situation for $L_5$ and $L_6$ is more complicated
than for $L_1$ and $L_2$.  This takes us to the following two lemmas.

\begin{lemma}\label{lemma:threesixone} Let $L$ and $L'$ be lines in
the plane parallel to $L_5$ such that the distance between $L$ and
$L'$ is at least half the distance between $L_5$ and $L_6$.  If the
degree of $f$ is sufficiently large, then no line segment which is the
union of two edges of $S^*$ in the direction of $\za^{-1}$ meets both
$L$ and $L'$.
\end{lemma}
  \begin{proof} We consider whether or not there exists a line segment
meeting both $L$ and $L'$ which is the union of two edges of $S^*$
which are translates of the line segment joining 0 and $2 \za^{-1}$.
As in the proof of Lemma~\ref{lemma:onetwo}, we obtain the equation $x
+ 2 r \za^{-1} = s\zt$, and we wish to have no solution with $x \ge
1/2$, $|r| \le 2$, and $s\in \bR$.  So we wish to have no solution to
the equation $1 + 2 r \za^{-1} = s\zt$ with $|r| \le 4$ and $s\in
\bR$.  Solving the last equation for $\za$ shows that $\za=r\frac{2}{s
\zt-1}$.  The argument in line~\ref{lin:argtau} with $\zt^{-1}$
instead of $\zt$ shows for all real numbers $s$ that
$\left|s-\zt^{-1}\right|\ge \frac{\sqrt{3}}{2}\left|\zt\right|^{-1}$,
and so $\left|s \zt-1\right|\ge \frac{\sqrt{3}}{2}$.  So
$r\frac{2}{s\zt-1}$ is bounded uniformly in $\zt$ for
$\left|r\right|\le 4$ and $s\in \bR$.  Hence $\za$ is bounded for
$\left|r\right|\le 4$ and $s\in \bR$.  Since $\text{deg}(f)=\za
\overline{\za}$, this proves that there is no such line segment if the
degree of $f$ is sufficiently large.

This proves Lemma~\ref{lemma:threesixone}.

\end{proof}

\begin{lemma}\label{lemma:threesixtwo}Let $L$ and $L'$ be lines in the
plane parallel to $L_5$.  Let $a$ and $c$ be the integers such that
$\za=a+c \zt$.
\begin{enumerate}
  \item If the distance between $L$ and $L'$ is at least half the
distance between $L_5$ and $L_6$ and if either $c\ge 11$ or $c=0$ with
$a\ge 5$, then no line segment which is the union of two edges of
$S^*$ in the direction of either $\za^{-1}(1+\zt)$ or
$\za^{-1}(-1+\zt)$ meets both $L$ and $L'$.
  \item If the distance between $L$ and $L'$ is at least twice
the distance between $L_5$ and $L_6$, then usually no edge
of $S^*$ in the direction of either $\za^{-1}(1+\zt)$ or
$\za^{-1}(-1+\zt)$ meets both $L$ and $L'$.
\end{enumerate}
\end{lemma}
  \begin{proof} We prove both statements together.  We first consider
edges of $S^*$ in the direction of $\za^{-1}(1+\zt)$.  Arguing as in
the proof of Lemma~\ref{lemma:threesixone}, we obtain the equation
$1+r \za^{-1}(1+\zt)=s \zt$.  For statement 1 we want there to be no
solution in real numbers $r$ and $s$ with $\left|r\right|\le 4$.  For
statement 2 the restriction on $r$ is $\left|r\right|\le\frac{1}{2}$.
Solving for $\za$ shows that $\za=-r\frac{1+\zt}{1-s \zt}$.

We justify every step of the next display immediately after it.
  \begin{equation*}
c\text{Im}(\zt)=
\text{Im}(\za)\le \left|\za\right|=
\left|r\right|\left|\frac{1+\zt}{1-s \zt}\right|\le
\left|r\right|\frac{4\left|\zt\right|}{\sqrt{3}}\le
\left|r\right|\frac{8}{3}\text{Im}(\zt)
  \end{equation*}
The two equations and the first inequality are straightforward.  For
the second inequality, we use the fact that $1\le \left|\zt\right|$ to
obtain $\left|1+\zt\right|\le 1+\left|\zt\right|\le
2\left|\zt\right|$.  Using line~\ref{lin:argtau} as in the proof of
Lemma~\ref{lemma:threesixone}, we obtain $\left|1-s \zt\right|\ge
\frac{\sqrt{3}}{2}$.  Combining the results of the last two sentences
gives the second inequality of the display.  Line~\ref{lin:argtau}
implies that $\left|\zt\right|\le \frac{2}{\sqrt{3}}\text{Im}(\zt)$.
This gives the last inequality of the above display.  From the display
we conclude that $c\le \left|r\right|\frac{8}{3}$.  Hence if
$\left|r\right|\le 4$, then $c\le \frac{32}{3}$ and if
$\left|r\right|\le \frac{1}{2}$, then $c\le \frac{4}{3}$.  This proves
statement 1 for edges of $S^*$ in the direction of $\za^{-1}(1+\zt)$
except when $c=0$ and it proves statement 2 for edges of $S^*$ in the
direction of $\za^{-1}(-1+\zt)$ except when $c\in \{0,1\}$.

To continue, we consider the image of the extended real line under the
linear fractional transformation $z\mapsto \frac{1+\zt}{1-z \zt}$.
One verifies that $\infty\mapsto 0$, $-1\mapsto 1$, and $0\mapsto
1+\zt$.  Since linear fractional transformations map circles and
extended lines to circles and extended lines, it follows that the
image of the extended real line is the circle $C$ containing 0, 1, and
$1+\zt$.

Thus if $\left|r\right|\le 4$ and $c=0$, then $0<\za\le 4$.  This
completes the proof of statement 1 for edges of $S^*$ in the direction
of $\za^{-1}(1+\zt)$.  If $\left|r\right|\le \frac{1}{2}$ and $c=0$,
then $0<\za\le \frac{1}{2}$.  Hence statement 2 is true for edges of
$S^*$ in the direction of $\za^{-1}(1+\zt)$ if $c=0$.

It remains to prove statement 2 for edges of $S^*$ in the direction of
$\za^{-1}(1+\zt)$ when $c=1$.  Recall that $C$ contains 0, 1, and
$1+\zt$.  This implies that the center of $C$ is on the line given by
$\text{Re}(z)=\frac{1}{2}$. Every point of the circle $C'$ through 0
and 1 with center $\frac{1}{2}$ has imaginary part at most
$\frac{1}{2}$.  Since $\text{Im}(1+\zt)\ge
\frac{\sqrt{3}}{2}>\frac{1}{2}$, the imaginary part of the center of
$C$ is positive.  Comparing $C$ with $C'$, we see that no points of
the form $a-\zt$ with $a\in \mathbb{R}$ are within $C$.  Since
$\text{Re}(1+\zt)>\frac{1}{2}$, the complex number $1+\zt$ is in
the right half of $C$.  So no complex number of the form $a+\zt$ with
$a>1$ is within $C$.  Hence no such number is within $\frac{1}{2}C$.
Since ``usually'' means that $a\ge 3$ if $c=1$, this completes the
proof of statement 2 for edges of $S^*$ in the direction of
$\za^{-1}(1+\zt)$.

For edges of $S^*$ in the direction of $\za^{-1}(-1+\zt)$, we argue in
the same way.  The only modification occurs at the very end.  Whereas
before the circle $C$ contains $1+\zt$, now it contains $-1+\zt$.
This is insufficient to conclude that no complex number of the form
$a+\zt$ with $a>1$ is within $C$.  For this we verify that the linear
fractional transformation $z\mapsto \frac{-1+\zt}{1-z \zt}$ maps the
real number $\frac{-1+\zt+\overline{\zt}}{\zt \overline{\zt}}$ to
$-\overline{\zt}=-2\text{Re}(\zt)+\zt$.  Now we can conclude that no
complex number of the form $a+\zt$ with $a>1$ is within $C$.

This proves Lemma~\ref{lemma:threesixtwo}.
\end{proof}

We now consider the claims made in the outline of
Theorem~\ref{thm:onetilea}.  The first claim is that $\widehat{L}_1$
and $\widehat{L}_2$ are usually disjoint.  This follows from
Lemma~\ref{lemma:onetwo}.  The next claim is that $P_1$ is usually
strictly between $-1+\zb$ and $\zb$ in $\widehat{L}_1$.  This is the
main content of the first statement of the next lemma.  The other two
statements are closely related results which will be used later.

\begin{lemma}\label{lemma:pone} \emph{(1)} The point $P_1$ is usually
strictly between $-1+\zb$ and $\zb$ in $\widehat{L}_1$ and $P_1$
is usually not adjacent to either $-1+\zb$ or $\zb$ in the tiling $S'$.
\begin{enumerate}
  \item [(2)] Usually no tile of $S'$ meets both $\widehat{L}_3$
between $P_1$ and $P_2$ and $\widehat{L}_1$ between $Q_1$ and $R_1$.
  \item [(3)] Usually no tile of $S'$ meets both $\widehat{L}_4$
between $R_1$ and $R_2$ and $\widehat{L}_1$ between $P_1$ and $Q_1$.
\end{enumerate}
\end{lemma}
  \begin{proof} We prove all three statements together.  As usual, let
$a$ and $c$ be the integers such that $\za=a+c \zt$.  We begin by
proving Lemma~\ref{lemma:pone} for either $c=0$ or $c\ge 11$.  In this
case statement 2 of Lemma~\ref{lemma:approximations},
Lemma~\ref{lemma:threesixone}, and statement 1 of
Lemma~\ref{lemma:threesixtwo} combine to imply that usually no tile of
$S'$ meets both $\widehat{L}_3$ and $\widehat{L}_5$.  (The condition in
Lemma~\ref{lemma:threesixtwo} that $a\ge 5$ can be met by taking the
degree of $f$ to be sufficiently large.)  Since $\zb\in L_5$, we may
assume that $\zb\in \widehat{L}_5$.  So we have that $\zb\in
\widehat{L}_1\cap \widehat{L}_5$, and it is easy to see that the
points of $\widehat{L}_1$ which are on the same side of $\zb$ as
$1+\zb$ either lie on $\widehat{L}_5$ or are on the same side of
$\widehat{L}_5$ as $1+\zb$.
Since $\widehat{L}_3$
is usually strictly on the other side of $\widehat{L}_5$ and $P_1\in
\widehat{L}_3$, it follows that $P_1$ is usually strictly on the same
side of $\zb$ as $-1+\zb$ in $\widehat{L}_1$ and that $P_1$ is
usually not adjacent to $\zb$ in $S'$.
This argument also shows that usually no
tile of $S'$ meets $\widehat{L}_3$ between $P_1$ and $P_2$ and
$\widehat{L}_1$ between $Q_1$ and $R_1$.
We have so far proved that $P_1$ is usually strictly on the same
side of $\zb$ as $-1+\zb$ in $\widehat{L}_1$, that $P_1$ is usually not
adjacent to $\zb$ in $S'$, and
that statement 2 is true if either $c=0$ or $c\ge 11$.
Analogous arguments prove that $P_1$ is usually strictly
on the same side of $-1+\zb$ as $\zb$ in $\widehat{L}_1$, that $P_1$
is usually not adjacent to $-1+\zb$ in $S'$, and that statement 3
is true if either $c=0$ or $c\ge 11$.  This proves
Lemma~\ref{lemma:pone} if either $c=0$ or $c\ge 11$.

So suppose that $1\le c\le 10$ for the rest of the proof of
Lemma~\ref{lemma:pone}.  We partition this case into two subcases
according to whether $a<3c$ or $a\ge 3c$.  Let $b$ and $d$ be the
integers such that $\za \zt=b+d \zt$.  If $a<3c$, then both $a$ and
$c$ are bounded.  With $a$ and $c$ bounded, Lemma~\ref{lemma:mx}
implies that $d$ is bounded.  Because $\deg(f)=ad-bc$ and this degree
may be taken to be arbitrarily large, the integer $b$ may be taken to
be arbitrarily negative.  So if $a<3c$, then $d$ is bounded and $b$
may be taken to be arbitrarily negative. On the other hand, if $a\ge
3c$, then Lemma~\ref{lemma:mx} implies that $d>0$.  Of course, $b<0$.

In this paragraph suppose that $a\ge 3c$.  We will prove statement 2
and half of statement 1 under this assumption.  Let $X=\{x
\za^{-1}+y\za^{-1}\zt+\zb:x,y\in \bR,x\le -\frac{3}{2}\}$, a closed
half plane.  Let $\overline{X}$ be the union of the tiles of $S'$
contained in $X$.  Both $\zj(\overline{X})$ and the image of
$\zj(\overline{X})$ under the rotation of order 2 about $\zj(\zb)$ are
shaded in Figure~\ref{fig:atleast3}.  There are essentially two
possibilities, hence two parts to Figure~\ref{fig:atleast3}, depending
on whether the edge of $T$ containing $\zj(\zb)$ has negative or
positive slope.  It is possible to choose $\widehat{L}_1$ so that
the portion of $\zj(\widehat{L}_1)$ in the region shown
in Figure~\ref{fig:atleast3} is contained in the hatched region which is
pinched near $\zj(\zb)$.  The lines $L_1$
and $L_3$ meet at
$-\frac{1}{2}+\zb=-\frac{a}{2}\za^{-1}-\frac{c}{2}\za^{-1}\zt+\zb$,
and this point is in $X$ since $a\ge 3c\ge 3$.  Moreover, this point
is in the boundary of $X$ if and only if $a=3$ and $c=1$.  Since
$b<0$, the half of $L_3$ with endpoint $-\frac{1}{2}+\zb$ which
contains $-\frac{1}{2}+\zt+\zb=-\frac{1}{2}+b \za^{-1}+d
\za^{-1}\zt+\zb$ is in $X$.  So the half of $\widehat{L}_3$ with
endpoint $P_1$ which contains $P_2$ is contained in $\overline{X}$.
Similarly, the half of $\widehat{L}_1$ with endpoint $P_1$ which does
not contain $\zb$ is contained in $\overline{X}$.  So the half of
$\widehat{L}_1$ with endpoint $Q_1$ which does not contain $\zb$ is
contained in the image of $\overline{X}$ under the rotation of order 2
about $\zb$.  We see that
$P_1$ is strictly on the same side of $\zb$ as $-1+\zb$ in
$\widehat{L}_1$ and that $P_1$ is not adjacent to $\zb$ in $S'$.
Moreover no tile of $S'$ meets both
$\widehat{L}_3$ between $P_1$ and $P_2$ and $\widehat{L}_1$ between
$Q_1$ and $R_1$.  We have just proved statement 2 and half of
statement 1 when $1\le c\le 10$ and $a\ge 3c$.

  \begin{figure}
\centerline{\includegraphics{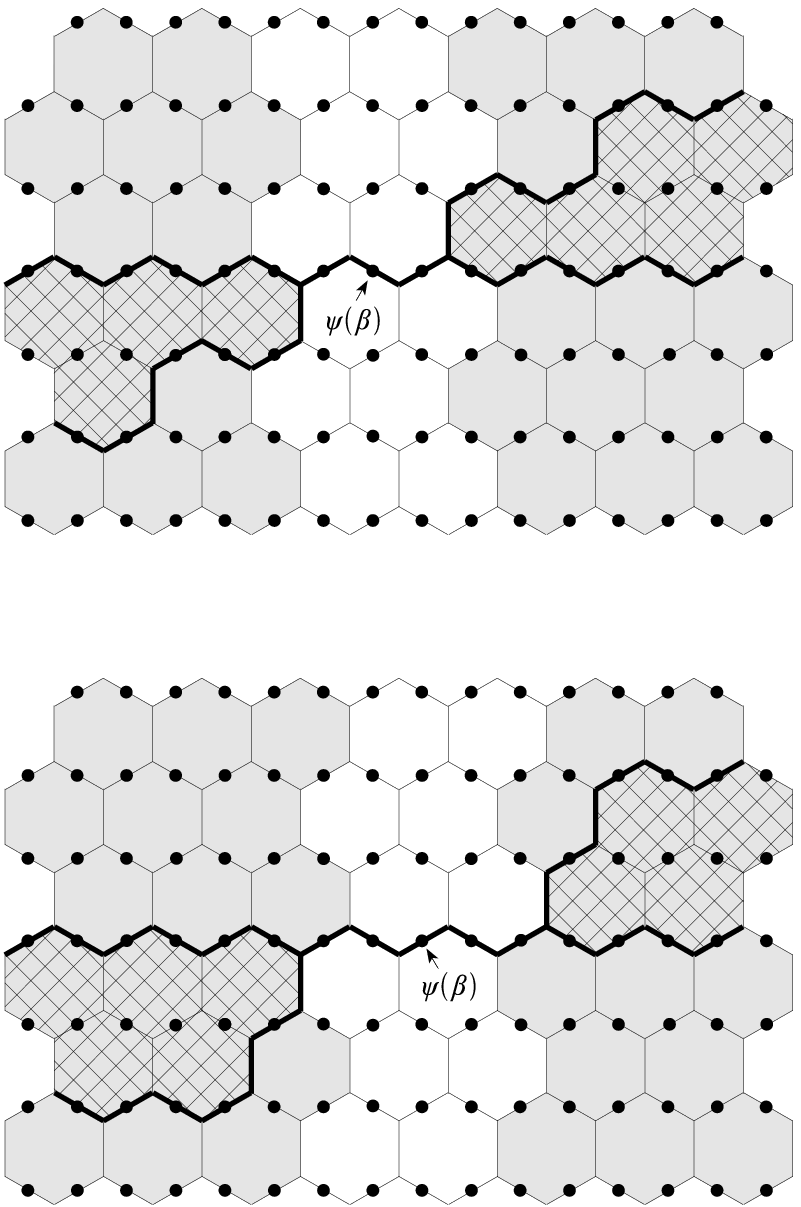}} \caption{Proving Lemma~\ref{lemma:pone}.}
\label{fig:atleast3}
  \end{figure}

Next suppose that $a<3c$.  We will prove statement 2 and the same half
of statement 1 under this assumption.  Recall from the definition of
usually that $\za\notin \{\zt,1+\zt,2+\zt\}$.  This together with the
inequality $a<3c$ implies that we may assume that $c\ne 1$.  For every
integer $m$ we consider the closed half plane $Y_m=\{x \za^{-1}+y
\za^{-1}\zt+\zb:y\ge m-\frac{1}{2}\}$.  Let $\overline{Y}_m$ be the
union of the tiles of $S'$ contained in $Y_m$.  Now let $m$ be the
ceiling of $-\frac{c}{2}$.  Again $L_1$ and $L_3$ meet at
$-\frac{1}{2}+\zb=-\frac{a}{2}\za^{-1}-\frac{c}{2}\za^{-1}\zt+\zb$.
This point is in $Y_m$.  Because $L_3$ contains
$-\frac{1}{2}+\zb$ and $-\frac{1}{2}+\zt+\zb=
-\frac{1}{2} + b \za^{-1}+d
\za^{-1}\zt+\zb$ and because $d$ is bounded and $b$ can be made
arbitrarily negative, the slope of $\zj(L_3)$ can be made arbitrarily
close to 0. The edge path distance between $-\frac{1}{2} + \zb$ and
$P_1$ is bounded since $a$ and $c$ are bounded, so
it follows that usually an arbitrarily long initial
portion of the directed segment of $\widehat{L}_3$ from $P_1$ to $P_2$
is in the boundary of either $\overline{Y}_m$ or
$\overline{Y}_{m-1}$. In particular, $P_1$ is usually in the boundary
of either $\overline{Y}_m$ or $\overline{Y}_{m-1}$.  Moreover since
$c\ge 2$, it follows that $m<0$, and so $P_1\notin \overline{Y}_0$.
Similarly, the half of $\widehat{L}_1$ with endpoint $P_1$ which does
not contain $\zb$ is disjoint from $\overline{Y}_0$.  On the other
hand, $\zb\in\overline{Y}_0$, and the half of $\widehat{L}_1$ with
endpoint $\zb$ which contains $1+\zb$ is even in $\overline{Y}_0$.
The previous two sentences together with the fact that $Q_1$ is the
image of $P_1$ under the rotation of order 2 about $\zb$ imply that
the half of $\widehat{L}_1$ with endpoint $Q_1$ which does not contain
$\zb$ is in $\overline{Y}_1$.  Thus $P_1$ is usually strictly on the
same side of $\zb$ as $-1+\zb$ in $\widehat{L}_1$, it is usually
not adjacent to $\zb$ in $S'$, and usually no tile
of $S'$ meets both $\widehat{L}_3$ between $P_1$ and $P_2$ and
$\widehat{L}_1$ between $Q_1$ and $R_1$.  This completes the proof of
statement 2 and half of statement 1 when $1\le c\le 10$.

Finally we prove statement 3 and the other half of statement 1 when
$1\le c\le 10$.  As in the previous paragraph, it is still true that
$-\frac{1}{2}+\zb$ is contained in $Y_m$, where $m$ is the ceiling of
$-\frac{c}{2}$.  It is also still usually true that $P_1$ is in
$\overline{Y}_n$ with either $n=m$ or $n=m-1$.  If $a\ge 3c$, then the
slope of $\zj(L_3)$ is negative and the directed segment of
$\widehat{L}_3$ from $P_1$ to $P_2$ is also in $\overline{Y}_n$.  If
$a<3c$, then this slope might be negative, but usually an arbitrarily
long initial portion of this directed segment is contained in
$\overline{Y}_n$.  Moreover, if $c\in \{1,2\}$, then $d\ge 0$, the
slope of $\zj(L_3)$ is nonpositive, and we may take $n=m$.  It follows
that $-c<n$ and that $-1+\zb\notin \overline{Y}_n$.  It is also true
that the half of $\widehat{L}_1$ with endpoint $P_1$ which contains
$\zb$ is contained in $\overline{Y}_n$.
The last two sentences combine to prove that $P_1$ is
usually strictly on the same side of $-1+\zb$ as $\zb$ in
$\widehat{L}_1$ and that $P_1$ is not adjacent to $-1+\zb$ in $S'$.
The translation $z\mapsto
z+2=z+2a\za^{-1}+2c \za^{-1}\zt$ stabilizes $\widehat{L}_1$ and it
takes the directed segment of $\widehat{L}_3$ from $P_1$ to $P_2$ to
the directed segment of $\widehat{L}_4$ from $R_1$ to $R_2$.  So
either the directed segment of $\widehat{L}_4$ from $R_1$ to $R_2$ is
contained in $\overline{Y}_{n+2c}$ or at least an arbitrarily long
initial portion of it is usually contained in
$\overline{Y}_{n+2c}$. Since $-1+b\notin \overline{Y}_n$, it follows
that $1+\zb\notin \overline{Y}_{n+2c}$.  Since the segment of
$\widehat{L}_1$ between $P_1$ and $Q_1$ is contained in the image of
$\overline{Y}_{n+2c}$ under the rotation of order 2 about $1+\zb$, it
follows that usually no tile of $S'$ meets both the segment of
$\widehat{L}_4$ between $R_1$ and $R_2$ and the segment of
$\widehat{L}_1$ between $P_1$ and $Q_1$.

This proves Lemma~\ref{lemma:pone}.

\end{proof}

So $P_1$ is usually strictly between $-1+\zb$ and $\zb$ in
$\widehat{L}_1$.  The next claim in the outline is that $P_2$ is
usually strictly between $-1+\zt+\zb$ and $\zt+\zb$ in
$\widehat{L}_2$.  To see this, note that the previous claim and
symmetry show that $R_2$ is usually strictly between $1+\zt+\zb$ and
$2+\zt+\zb$ in $\widehat{L}_2$.  This and a translation imply that
$P_2$ is usually strictly between $-1+\zt+\zb$ and $\zt+\zb$ in
$\widehat{L}_2$.

The next claim is that the closed segment of $\widehat{L}_3$ with
endpoints $P_1$ and $P_2$ is usually disjoint from its image under the
translation given by $z\mapsto z+R_1-P_1$.  This follows from
statement 1 of Lemma~\ref{lemma:approximations},
Lemma~\ref{lemma:threesixone}, and statement 2 of
Lemma~\ref{lemma:threesixtwo}.

In the outline of the proof of Theorem~\ref{thm:onetilea} a
fundamental domain $F$ for the orbifold fundamental group $\zG$ is
constructed.  The points $P_1$, $Q_1$, $R_1$, $P_2$, $Q_2$, $R_2$,
$\zb$, $1+\zb$, $\zt+\zb$, $1+\zt+\zb$ on the boundary of $F$
decompose the boundary of $F$ into ten edges, and there is a pairing
of these edges.  There is a corresponding decomposition of the
boundary of every tile of $S'$ into ten paired edges.  This and the
tiling of $F$ by tiles of $S'$ determine a finite subdivision rule
$\cR$.  The final claim is that the mesh of $\cR$ usually approaches 0
combinatorially.  To say that the mesh of $\cR$ approaches 0
combinatorially means that there exists a positive integer $n$ such
that the following two conditions hold, where $t$ is the tile type of
$\cR$.  Condition 1 is that every edge of $t$ properly subdivides in
$\cR^n(t)$.  Condition 2 is that no tile of $\cR^n(t)$ has two edges
contained in disjoint edges of $t$.  The next two lemmas verify that
$\cR$ usually satisfies conditions 1 and 2, and so the mesh of $\cR$
usually approaches 0 combinatorially.  The strategy is to first show
in Lemma~\ref{lemma:cndone} that if $\cR$ satisfies condition 2, then
it satisfies condition 1.  Lemma~\ref{lemma:cndtwo} completes the
verification by showing that $\cR$ satisfies condition 2.

\begin{lemma}\label{lemma:cndone} If $\cR$ satisfies condition 2, then
it satisfies condition 1.
\end{lemma}
  \begin{proof} The translates of the fundamental domain $F$ under the
universal orbifold group tile the plane.  Furthermore the points
$P_1$, $Q_1$, $R_1$, $P_2$, $Q_2$ $R_2$ are vertices of this tiling
with valence 3.  So if $v\in \{P_1,Q_1,R_1,P_2,Q_2,R_2\}$, then there
exists exactly one tile $s$ of $S'$ such that $v\in s\subseteq F$.

If $e$ is an edge of $F$ which does not subdivide, then $e$ is in fact
an edge of $S'$.  Moreover the previous paragraph implies that if $s$
is the tile of $S'$ with $e\subseteq s\subseteq F$, then two edges of
$s$ are contained in disjoint edges of $F$.  Therefore if condition 1
fails, then so does condition 2.

This proves Lemma~\ref{lemma:cndone}.

\end{proof}

\begin{lemma}\label{lemma:cndtwo} The subdivision rule $\cR$ usually
satisfies condition 2.
\end{lemma}
  \begin{proof} To prove this, we interpret condition 2 in terms of a
directed graph $G$ which was considered previously in \cite{CFPP}.
The vertices of $G$ are ordered pairs $(e_1,e_2)$, where $e_1$ and
$e_2$ are disjoint edges of the tile type $t$ of $\cR$.  There exists
a directed edge from the vertex $(e_1,e_2)$ to the vertex $(e_3,e_4)$
if and only if $\cR(t)$ contains a tile $s$ such that an edge of $s$
with edge type $e_3$ is contained in $e_1$ and an edge of $s$ with
edge type $e_4$ is contained in $e_2$.  Then $\cR$ satisfies condition
2 if and only if $G$ contains no directed cycles.  We will show that
$G$ usually has no directed cycles.

To facilitate the discussion we number the edge types of $\cR$ so that
the types of the ten edges of $F$ are numbered in counterclockwise
order beginning with the two edges of $F$ which contain $\zb$.  So the
two edges containing $\zb$ have types 1 and 2, the two edges
containing $1+\zb$ have types 3 and 4, the two edges containing
$1+\zt+\zb$ have types 6 and 7, and the two edges containing $\zt+\zb$
have types 8 and 9.

Let $e_1$ and $e_2$ be
distinct edges of $t$.  Let $s$ be any tile of $S'$, and let $E_1$ and
$E_2$ be the edges of $s$ corresponding to $e_1$ and $e_2$.  Then each
of $E_1$ and $E_2$ might be an edge of $S$ or half of an edge of $S$.
We say that $e_1$ and $e_2$ are $S$-\textbf{adjacent} if the edges of
$S$ containing $E_1$ and $E_2$ have a vertex in common.

Let $e_1$ and $e_2$ be disjoint edges of $t$ which are $S$-adjacent.  We
next show that $(e_1,e_2)$ is usually not the terminal vertex of a directed
edge of $G$.  For this we assume that $s$ is a tile of $S'$ contained
in $F$ such that the edges $E_1$ and $E_2$ of $s$ corresponding to
$e_1$ and $e_2$ are contained in edges of $F$.
Statement 1 of Lemma~\ref{lemma:pone} shows
that $P_1$ is usually not adjacent to either $-1+\zb$ or $\zb$ in $S'$.  Using
this and symmetry, we see that usually no edge of $F$ is a half edge of $S$,
that is, an edge of $S'$ which is not an edge of $S$.  Moreover, every
edge of $F$ has an endpoint which is a vertex of $S$.  It follows that
the edges of $F$ which contain $E_1$ and $E_2$ usually have a vertex in
common.
So these edges of $F$ are usually not disjoint.
Thus $(e_1,e_2)$ is usually not the
terminal vertex of a directed edge of $G$.  Therefore if $e_1$ and
$e_2$ are $S$-adjacent, then $(e_1,e_2)$ is usually not contained in a
directed cycle of $G$.

Next let $e_1$ and $e_2$ be edges of $t$ such that one of them has
edge type either 1, 2, 3, or 4 and one of them has edge type either 6,
7, 8, or 9.  The corresponding edges of $F$ are contained in
$\widehat{L}_1$ and $\widehat{L}_2$.  In this case
Lemma~\ref{lemma:onetwo} implies that $(e_1,e_2)$ is usually not the
initial vertex of a directed edge of $G$.  Therefore $(e_1,e_2)$ is
usually not contained in a directed cycle of $G$.

Now we prove that $G$ usually does not contain a directed cycle.  This
will be done by contradiction.  So suppose that $(e_1,e_2)$ is the
initial vertex and that $(e_3,e_4)$ is the terminal vertex of an edge
in a directed cycle of $G$.  The previous two paragraphs show that
$e_3$ and $e_4$ are usually not $S$-adjacent and usually one of them has type
either 5 or 10.

Suppose that $e_1$ has type 10.  The above discussion of $S$-adjacent
edges implies that the type of $e_2$ is usually not 1, 2, 8, or 9.  Statement 2
of Lemma~\ref{lemma:pone} implies that the type of $e_2$ is usually
not 3 or 4.  Statement 3 of Lemma~\ref{lemma:pone} and symmetry imply
that the type of $e_2$ is usually not 6 or 7.  So the type of $e_2$ is
usually 5.  Hence if $e_1$ corresponds to the edge of $F$ in
$\widehat{L}_3$, then $e_2$ usually corresponds to the edge of $F$ in
$\widehat{L}_4$.  Similarly, if $e_1$ corresponds to the edge of $F$
in $\widehat{L}_4$, then $e_2$ usually corresponds to the edge of $F$
in $\widehat{L}_3$.  The same is true for $e_3$ and $e_4$.  So we may
assume that one of $e_1$ and $e_2$ corresponds to the edge of $F$ in
$\widehat{L}_3$, the other corresponds to the edge of $F$ in
$\widehat{L}_4$, and $e_3$ and $e_4$ also correspond to two edges of
$S'$ which are dual to edges of $S^*$ in the direction of $\za^{-1}$.
Now Lemma~\ref{lemma:threesixone} shows that this is usually
impossible.

Thus $\cR$ usually satisfies condition 2.  This proves
Lemma~\ref{lemma:cndtwo}.

\end{proof}

This completes the proof that the mesh of $\cR$ usually approaches 0
combinatorially.  The proof of Theorem~\ref{thm:onetilea} is now
complete except for the cases in which $\za\in \{\zt,\zt+1,\zt+2\}$.
So suppose that $\za\in \{\zt,\zt+1,\zt+2\}$.  In the usual notation,
$c=1$ and $a\in \{0,1,2\}$.  Lemma~\ref{lemma:mx} implies that $d\in
\{a,a+1\}$.  As in the proof of Lemma~\ref{lemma:pone}, we may assume
that $b$ is arbitrarily negative.

We proceed by indicating the form of a suitable fundamental domain
$F$.  Rather than working directly with $F$, we find it easier to work
with $\zj(F)$.  By construction, the points of the lattice
$\zj(\za^{-1}\zL)$ are the midpoints of the edges of $T$ which are not
vertical.  So there are essentially two possibilities for $\zj(\zb)$;
either $\zj(\zb)$ is the midpoint of an edge of $T$ with positive
slope or $\zj(\zb)$ is the midpoint of an edge of $T$ with negative
slope.  The same holds for $\zj(\zt+\zb)$.  This leads to two
possibilities for the form of the edges of $\zj(F)$ with edge types 1,
2, 3, 4 and two possibilities for the form of the edges of $\zj(F)$
with edge types 6, 7, 8, 9.  In the following paragraphs we indicate
the form of $\zj(F)$ by giving the forms of the edges of $\zj(F)$ with
these edge types.  There are in general many ways to complete the
construction of $\zj(F)$.  It is a straightforward matter to do so.
One then checks that this defines a fundamental domain $F$ for the
orbifold fundamental group $\zG$.  As in the outline of the proof of
Theorem~\ref{thm:onetilea}, the fundamental domain $F$ has an edge
pairing which is combinatorially equivalent to the original edge
pairing on the tiles of $S'$.  As before we obtain a finite
subdivision rule $\cR$.  Finally one checks that the mesh of $\cR$
approaches 0 combinatorially.  Lemma~\ref{lemma:cndone} still holds,
and so to check that the mesh of $\cR$ approaches 0 combinatorially,
it suffices to check condition 2.  We leave the details to the reader.

Suppose that $(a,c)=(2,1)$.  Figure~\ref{fig:twoonea} shows four edge
paths.  We view these as building blocks for constructing fundamental
domains.  The first edge path in Figure~\ref{fig:twoonea} gives the
form of the edges of $\zj(F)$ with edge types 6, 7, 8,  and 9 when the
slope of the edge of $T$ containing $\zj(\zt+\zb)$ is positive.  The
second edge path occurs when this slope is negative.  The third edge
path gives the form of the edge of $\zj(F)$ with edge types 1, 2, 3,
and 4 when the slope of the edge of $T$ containing $\zj(\zb)$ is negative.
The fourth edge path occurs when this slope is positive.  The points
$\zj(\zb)$, $\zj(1+\zb)$, $\zj(\zt+\zb)$, and $\zj(1+\zt+\zb)$ are
indicated by small circles.  Circles indicating $\zj(\zb)$ are larger
than the others.  As in Figure~\ref{fig:newfundom}, the fundamental
domain $F$ contains points $P_1$, $Q_1$, $R_1$, $P_2$, $Q_2$, and $R_2$
and their images under $\zj$ are indicated by large dots.

For example, suppose that $(a,b,c,d)=(2,-3,1,2)$ and that $\zb=0$.
The slope of the edge of $T$ containing $\zj(\zb)=0$ is negative, so
we choose the third edge path in Figure~\ref{fig:twoonea}.  Since
$\zj(\zt+\zb)=\zj(\zt)=\zj(b \za^{-1}+d
\za^{-1}\zt)=\zj(-3\za^{-1}+2\za^{-1}\zt)$, the slope of the edge of
$T$ containing $\zj(\zt+\zb)$ is positive, so we also choose the first
edge path in Figure~\ref{fig:twoonea}.  This case is so simple that
these two choices determine $\zj(F)$, and we obtain the fundamental
domain shown in the left portion of Figure~\ref{fig:twooneb}.
The subdivision of the tile type of the corresponding finite
subdivision rule is shown in Figure~\ref{fig:fracthx2}.  The
tile type is the Gosper snowflake.

For another
example we take $(a,b,c,d)=(2,-6,1,2)$ with $\zb=0$.  In this example
the slope of the edge of $T$ containing $\zj(\zb)=0$ is still
negative.  The slope of the edge of $T$ containing
$\zj(\zt+\zb)=\zj(\zt)=\zj(b \za^{-1}+d
\za^{-1}\zt)=\zj(-6\za^{-1}+2\za^{-1}\zt)$ is also negative, so we
also choose the second edge path in Figure~\ref{fig:twoonea}.  The
edge paths in Figure~\ref{fig:twoonea} determine 8 of the 10 edges of
$\zj(F)$.  The two remaining edges must be translates of one another,
but they are not uniquely determined.  One choice is given in the
right portion of Figure~\ref{fig:twooneb}.  It is easy to check that
the meshes of these finite subdivision rules approach 0
combinatorially.  The situation is similar whenever $(a,c)=(2,1)$ and
the degree of $f$ is sufficiently large.

\begin{figure}
\centerline{\includegraphics{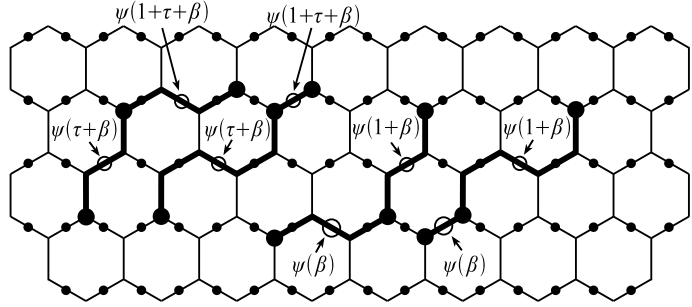}} \caption{Building
blocks for constructing fundamental domains for the cases in which
$(a,c)=(2,1)$.}
\label{fig:twoonea}
  \end{figure}

  \begin{figure}
\centerline{\includegraphics{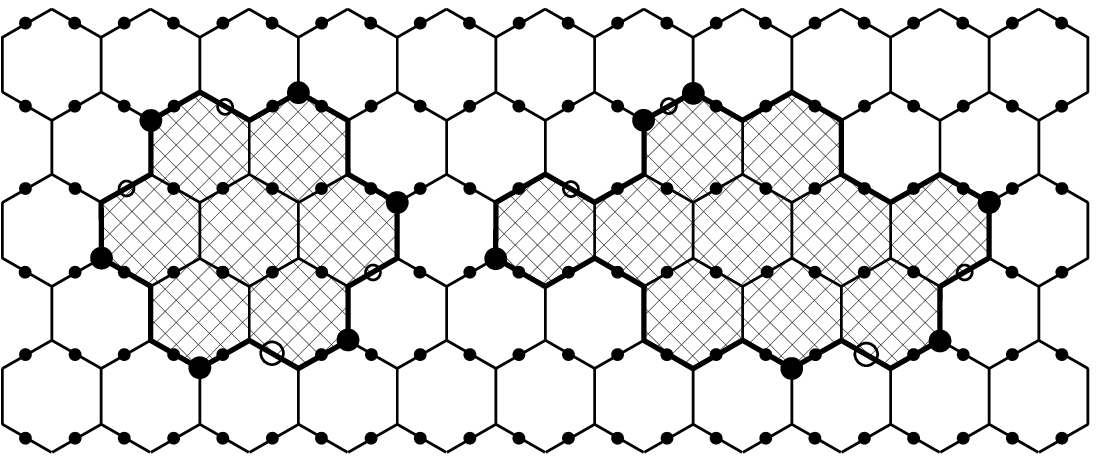}} \caption{Fundamental
domains for the cases in which $(a,b,c,d)=(2,-3,1,2)$ with $\zb=0$ and
$(a,b,c,d)=(2,-6,1,2)$ with $\zb=0$.}
\label{fig:twooneb}
  \end{figure}

\begin{figure}
\centerline{\includegraphics{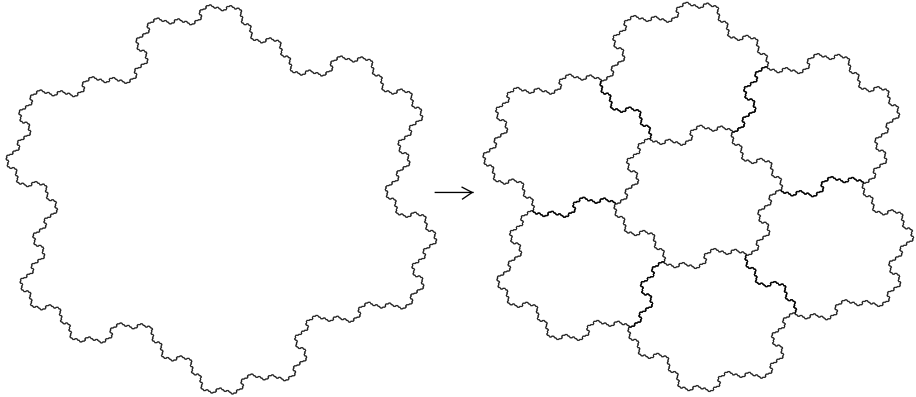}} \caption{The
subdivision of the tile type of the finite subdivision rule associated
to the Latt\`es map with $(a,b,c,d)=(2,-3,1,2)$.}
\label{fig:fracthx2}
\end{figure}

Next suppose that $(a,c)=(1,1)$.  Figure~\ref{fig:oneonea} shows
building blocks for this case.  For example, suppose that
$(a,b,c,d)=(1,-3,1,2)$ and $\zb=0$.  The slope of the edge of $T$
containing $\zj(\zb)=0$ is negative, so we choose the fourth edge path
in Figure~\ref{fig:oneonea}.  Since $\zj(\zt+\zb)=\zj(\zt)=\zj(b
\za^{-1}+d \za^{-1}\zt)=\zj(-3\za^{-1}+2\za^{-1}\zt)$, the slope of
the edge of $T$ containing $\zj(\zt+\zb)$ is positive, so we also
choose the second edge path in Figure~\ref{fig:oneonea}.  We complete
these two edge paths in the straightforward way to the fundamental
domain shown in the left portion of Figure~\ref{fig:oneoneb}.  For
another example we take $(a,b,c,d)=(1,-6,1,2)$ and $\zb=0$.  Since
$\zj(\zb)=0$, the slope of the edge of $T$ containing $\zj(\zb)$ is
negative, so we choose the fourth edge path in
Figure~\ref{fig:oneonea}.  Since
$\zj(\zt+\zb)=\zj(\zt)=\zj(-6\za^{-1}+2\za^{-1}\zt)$, the slope of the
edge of $T$ containing $\zj(\zt+\zb)$ is negative, so we also choose
the first edge path in Figure~\ref{fig:oneonea}.  One way to complete
these two edge paths to a fundamental domain is shown in the right
portion of Figure~\ref{fig:oneoneb}.  Just as for the case in which
$(a,c)=(2,1)$, whenever $(a,c)=(1,1)$ and the degree of $f$ is
sufficiently large we obtain a fundamental domain which determines a
finite subdivision rule whose mesh approaches 0 combinatorially.

  \begin{figure}
\centerline{\includegraphics{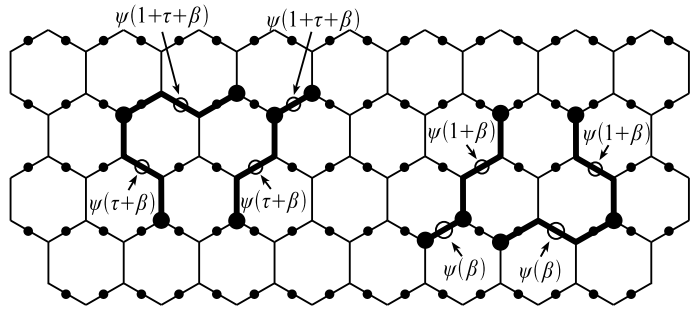}} \caption{Building blocks
for constructing fundamental domains for the cases in which
$(a,c)=(1,1)$.}
\label{fig:oneonea}
  \end{figure}

  \begin{figure}
\centerline{\includegraphics{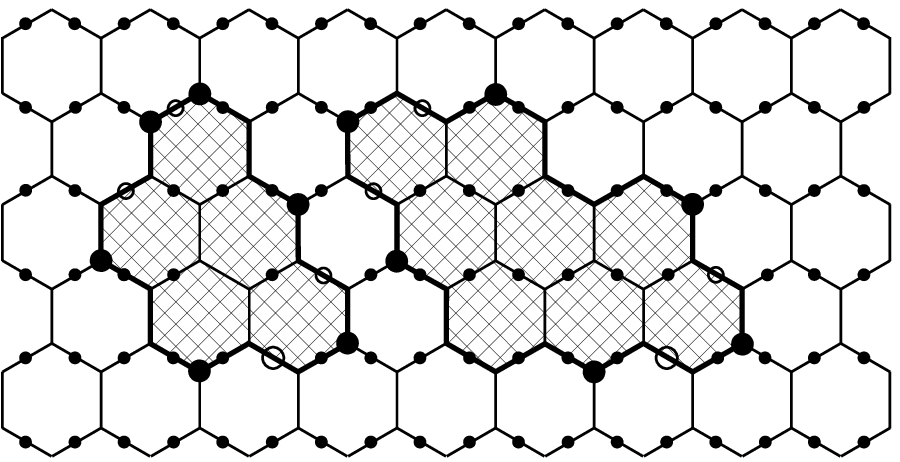}} \caption{Fundamental
domains for the cases in which $(a,b,c,d)=(1,-3,1,2)$ with $\zb=0$ and
$(a,b,c,d)=(1,-6,1,2)$ with $\zb=0$.}
\label{fig:oneoneb}
  \end{figure}

Finally suppose that $(a,c)=(0,1)$.  Figure~\ref{fig:zeroonea} shows
building blocks for this case.  For example, suppose that
$(a,b,c,d)=(0,-4,1,1)$ and $\zb=0$.  The slope of the edge containing
$\zj(\zb)=0$ is negative, so we choose the third edge path in
Figure~\ref{fig:zeroonea}.  Since $\zj(\zt+\zb)=\zj(\zt)=\zj(b
\za^{-1}+d \za^{-1}\zt)=\zj(-4\za^{-1}+\za^{-1}\zt)$, the slope of the
edge of $T$ containing $\zj(\zt+\zb)$ is positive, so we also choose
the second edge path in Figure~\ref{fig:zeroonea}.  We complete
these two edge paths as shown in the left portion of
Figure~\ref{fig:zerooneb} to obtain a fundamental domain.  For another
example we take $(a,b,c,d)=(0,-7,1,0)$ and $\zb=1$.  The slope of the
edge containing $\zj(\zb)=\zj(1)=\zj(a \za^{-1}+c
\za^{-1}\zt)=\zj(\za^{-1}\zt)$ is positive, so we choose the fourth
edge path in Figure~\ref{fig:zeroonea}.  Since
$\zj(\zt+\zb)=\zj(\zt+1)=\zj((a+b) \za^{-1}+(c+d)
\za^{-1}\zt)=\zj(-7\za^{-1}+\za^{-1}\zt)$, the slope of the edge of
$T$ containing $\zj(\zt+\zb)$ is negative, so we choose the first edge
path in Figure~\ref{fig:zeroonea}.  We complete these two edge paths
as shown in the right portion of Figure~\ref{fig:zerooneb} to obtain a
fundamental domain.  As before whenever $(a,c)=(0,1)$ and the degree of
$f$ is sufficiently large we obtain a fundamental domain which
determines a finite subdivision rule whose mesh approaches 0
combinatorially.

  \begin{figure}
\centerline{\includegraphics{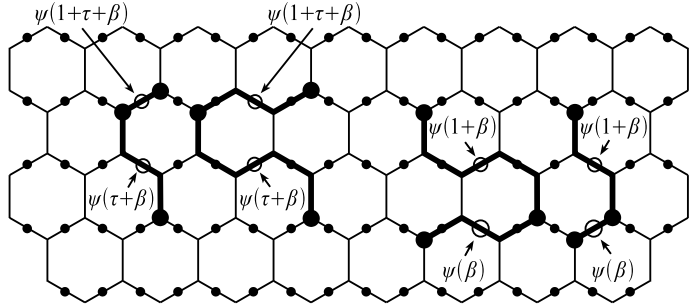}} \caption{Building blocks
for constructing fundamental domains for the cases in which
$(a,c)=(0,1)$.}
\label{fig:zeroonea}
  \end{figure}

  \begin{figure}
\centerline{\includegraphics{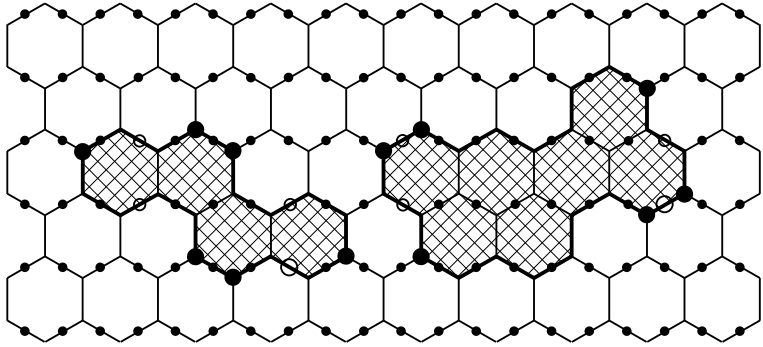}} \caption{Fundamental
domains for the cases in which $(a,b,c,d)=(0,-4,1,1)$ with $\zb=0$ and
$(a,b,c,d)=(0,-7,1,0)$ with $\zb=1$.}
\label{fig:zerooneb}
  \end{figure}

This completes the proof of Theorem~\ref{thm:onetilea}.

\end{proof}

\begin{thm}\label{thm:onetileb} Every nonrigid Latt\`{e}s map is the
subdivision map of a finite subdivision rule with one tile type, and
the tile has the form of type 1 in Figure~\ref{fig:types}.
\end{thm}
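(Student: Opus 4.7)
The key simplification in the nonrigid case is that $\za = n \in \bZ$ with $n \ge 2$, so the lift $\widetilde{f}(z) = nz + \zb$ (with $\zb \in \zL$) is a Euclidean similarity with integer scaling factor. As a consequence $\widetilde{f}$ sends a suitably chosen parallelogram tiling of $\bC$ literally onto a refinement of itself, and neither an isotopy argument nor the expansion-complex machinery of Theorem~\ref{thm:onetilea} is required: the subdivision map will equal $f$ on the nose.

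The plan is to exhibit a single parallelogram $F \subset \bC$ of area $2\,\text{Im}(\zt)$ (equal to the area of a $\zG$-fundamental domain) whose four corners form a single $\zG$-orbit in the coset $\zL + \frac{1+\zt}{2}$ and whose four edge-midpoints are four elements of $\zL$ representing the four distinct cosets of $\zL/2\zL$ (that is, the four postcritical points of $f$). Concretely, take $F$ with edge-vectors $1+\zt$ and $1-\zt$ and corners at $\pm\frac{1+\zt}{2}$ and $\pm\frac{1+\zt}{2}+(1-\zt)$; then the edge-midpoints are $\{0,\,1,\,1-\zt,\,-\zt\}$, hitting all four cosets of $\zL/2\zL$ as required. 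Under the four order-$2$ rotations in $\zG$ about these midpoints, opposite halves of each edge of $F$ fold together and all four corners are identified, so the quotient descends via $\wp$ to a one-tile subdivision complex on $\widehat{\bC}$ whose $1$-skeleton is a star-tree with a single accidental vertex at the center and the four postcritical points as leaves. This is the form of type $1$ in Figure~\ref{fig:types}.

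Next I would verify the subdivision property: the image $\widetilde{f}(F) = nF + \zb$ is a parallelogram of area $n^2\cdot 2\,\text{Im}(\zt)$, and because $n \in \bZ$ and $\zb \in \zL$, it decomposes exactly as a union of $n^2$ translates of $F$ by the translation subgroup $2\zL$ of $\zG$. Equivalently, $\widetilde{f}^{-1}$ refines the $\zG$-invariant tiling by $F$ into $n^2$ small copies per tile. Descending by $\wp$ yields a finite subdivision rule $\cR$ with one tile type whose subdivision map $\subm$ equals $f$ directly.

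The main obstacle I expect is the parity case analysis. For $n$ odd, $\frac{n-1}{2}(1+\zt) \in \zL$ forces $\widetilde{f}$ to send corners of $F$ to corners of the image tiling and midpoints to midpoints, giving a clean refinement. For $n$ even, $n\cdot\frac{1+\zt}{2} \in \zL$ instead, so $\widetilde{f}$ sends corners into the midpoint lattice; accordingly the central accidental vertex $c$ of the big tile plays the role of a postcritical vertex of one of the small tiles in the subdivision. A brief case check, using Lemma~\ref{lemma:beta} to control the permitted cosets of $\zb$ modulo $2\zL$, shows in either parity that the combinatorial subdivision structure is consistent and produces a finite subdivision rule with a single tile type of type $1$.
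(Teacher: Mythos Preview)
Your parallelogram is a perfectly good fundamental domain for $\zG$, but the crucial subdivision property $\widetilde{f}(\Sigma)\subseteq\Sigma$ (where $\Sigma$ is the lifted $1$-skeleton) fails in most cases, and your parity discussion does not repair this. Work in coordinates $(x,y)$ with respect to the basis $(1+\zt,\,1-\zt)$. Your tiling has its valence-$4$ corners on the set $(\bZ-\tfrac12)\times\bZ$, so the lines of $\Sigma$ in direction $1-\zt$ sit at $x\in\bZ-\tfrac12$ while those in direction $1+\zt$ sit at $y\in\bZ$. The affine map $\widetilde f$ acts by $(x,y)\mapsto(nx+\zb_1,\,ny+\zb_2)$ where $(\zb_1,\zb_2)$ are the coordinates of $\zb$. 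For the $1-\zt$ family you need $n(\bZ-\tfrac12)+\zb_1\subseteq\bZ-\tfrac12$; when $n$ is even this forces $\zb_1\in\bZ+\tfrac12$, yet for $n$ even Lemma~\ref{lemma:beta} lets you take $\zb=0$, giving $\zb_1=0$ and a contradiction. Concretely, for $n=2$, $\zb=0$ the image edge $\widetilde f([c_1,c_4])$ lies on the line $x=-1$, which is \emph{not} in $\Sigma$, so $\widetilde f(F)$ is not a union of tiles. For $n$ odd the same computation shows failure precisely when $\zb\equiv 1$ or $\zb\equiv\zt\pmod{2\zL}$, and these are genuine inequivalent Latt\`es maps. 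Thus your single $F$ handles only $n$ odd with $\zb\in\{0,1+\zt\}$; all other cases are uncovered.

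There is also a mismatch with the statement being proved. Your tile, when it does work, has four postcritical midpoints and one accidental corner-vertex of valence $4$: cut open, it is an octagon, and the $1$-skeleton on $\widehat\bC$ is the star tree with four leaves. That is not type~1. In the paper's proof, type~1 is the \emph{path} tree (four postcritical vertices in a row, no accidental vertex), and the tile is a parallelogram regarded as a \emph{hexagon}: four corners lying in $\zL$ itself, plus the midpoints of just one opposite pair of sides (also in $\zL$). The paper's key device---absent from your argument---is to vary the parallelogram $P$ with $\zb$ so that $\zb$ is always a valence-$4$ corner of the tiling; then $\widetilde f(P)=nP+\zb$ automatically has its corners in the corner lattice and is manifestly a union of tiles, for every $n\ge2$ and every admissible $\zb$. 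Placing corners in $\zL$ rather than at half-lattice points is exactly what makes the invariance $\widetilde f(\Sigma)\subseteq\Sigma$ automatic.
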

  \begin{proof}Let $f$ be a nonrigid Latt\`{e}s map.  Let
$\widetilde{f}(z)=\za z+\zb$ be a lift of $f$.  Then $\za\in \bZ$, and
we may assume that $\za\ge 2$.  Let 1 and $\zt$ form a basis of the
usual lattice $\zL$ with $\text{Im}(\zt)>0$.  We may add any element
of $2\zL$ to $\zb$, so we assume that $\zb\in \{0,1,\zt,1+\zt\}$.

Recall that Figure~\ref{fig:types} shows five types of planar tilings.
We construct a tiling $S'$ of the plane with type 1 so that every tile
of $S'$ is a parallelogram.  Furthermore one of the parallelograms of
$S'$ is chosen to be as in Figure~\ref{fig:funprlg}.  If $\zb=1$, then
we choose the leftmost parallelogram.  If $\zb=\zt$, then we choose
the middle parallelogram.  If $\zb=1+\zt$, then we choose the
rightmost parallelogram.  If $\zb=0$, then we choose any of these.
Let $P$ denote the chosen parallelogram.

  \begin{figure}
\centerline{\includegraphics{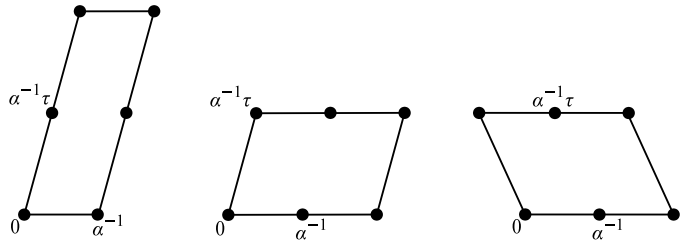}} \caption{The parallelogram
$P$.}
\label{fig:funprlg}
  \end{figure}

The tiling $S'$ is constructed so that $\zb$ is a vertex with valence
4.  From this it follows that $F=\widetilde{f}(P)$ is a union of tiles
of $S'$.  The parallelogram $P$ is really a hexagon because it has two
vertices with valence 2.  The two edges which contain one of the
vertices with valence 2 are interchanged by a rotation of order 2.
This pairs four of the six edges of $P$.  The remaining two edges of
$P$ are paired by a translation.  In the same way $F$ has six edges
which are paired so that $\widetilde{f}$ respects these edge pairings.
As in the proof of Theorem~\ref{thm:onetilea}, we obtain a finite
subdivision rule $\cR$ with one tile type.  Not as in the proof of
Theorem~\ref{thm:onetilea}, in the present situation $f$ is already
the subdivision map of $\cR$.

This proves Theorem~\ref{thm:onetileb}.

\end{proof}

\begin{cor}\label{cor:main} Every map in almost every analytic
conjugacy class of Latt\`{e}s maps is the subdivision map of a finite
subdivision rule with one tile type.
\end{cor}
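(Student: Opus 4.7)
The plan is to combine Theorem~\ref{thm:onetilea} and Theorem~\ref{thm:onetileb} together with the finiteness observation for rigid Latt\`{e}s maps of bounded degree made at the end of Section~\ref{sec:defn}.

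First, by Theorem~\ref{thm:onetilea}, there is a bound $D$ such that every Latt\`{e}s map with degree exceeding $D$ is the subdivision map of a finite subdivision rule with one tile type, so only analytic conjugacy classes of degree at most $D$ remain to be considered. Second, by Theorem~\ref{thm:onetileb}, every nonrigid Latt\`{e}s map is already such a subdivision map, so we may further restrict attention to rigid Latt\`{e}s maps of degree at most $D$. Third, the argument at the end of Section~\ref{sec:defn} shows that for any bound on $\deg(f)$ there are only finitely many analytic conjugacy classes of rigid Latt\`{e}s maps with that degree: the bound on $\deg(f)=ad-bc$ forces (via Lemmas~\ref{lemma:mx} and~\ref{lemma:cbound}) a bound on each of $a$, $b$, $c$, $d$, hence on $\za$ and $\zt$, and given $\za$ and $\zt$ only finitely many $\zb$ arise up to the equivalence of Corollary~\ref{cor:unqss} and Lemma~\ref{lemma:beta}. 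Hence only finitely many analytic conjugacy classes are not covered by the two theorems.

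The only small loose end is that Theorems~\ref{thm:onetilea} and \ref{thm:onetileb} are stated for a given Latt\`{e}s map, while the corollary concerns analytic conjugacy classes of maps. This is handled by noting that if $f_0=\zs\circ f\circ \zs^{-1}$ for a M\"{o}bius transformation $\zs$ and $f$ is the subdivision map of a finite subdivision rule $\cR$ with one tile type and subdivision complex $\SR$, then pushing the cell structure on $\SR$ forward by $\zs$ produces a cell structure on $\widehat{\bC}$ and a finite subdivision rule $\cR'$ with one tile type having $f_0$ as its subdivision map. Thus the property of being the subdivision map of a finite subdivision rule with one tile type depends only on the analytic conjugacy class.

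Putting these three observations together immediately gives the corollary: the classes not handled are contained in the finite set of rigid classes of degree at most $D$, so almost every analytic conjugacy class of Latt\`{e}s maps consists entirely of subdivision maps of finite subdivision rules with one tile type. There is no real obstacle here; the work is already done in Theorems~\ref{thm:onetilea} and \ref{thm:onetileb} and in the finiteness remark of Section~\ref{sec:defn}.
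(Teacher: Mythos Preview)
Your proof is correct and follows exactly the same approach as the paper: combine Theorem~\ref{thm:onetilea}, Theorem~\ref{thm:onetileb}, and the finiteness of rigid analytic conjugacy classes of bounded degree from Section~\ref{sec:defn}. Your additional remark that the property is invariant under analytic conjugacy is a nice clarification that the paper leaves implicit.
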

  \begin{proof} This follows from Theorem~\ref{thm:onetilea},
Theorem~\ref{thm:onetileb}, and the last statement in
Section~\ref{sec:defn}, namely, that there are only finitely many
analytic conjugacy classes of rigid Latt\`{e}s maps with bounded
degree.

\end{proof}

\section{An exceptional quadratic Latt\`{e}s map}\label{sec:badquadratic}\nosubsections

This section deals with the Latt\`{e}s map $f$ with lift
$\widetilde{f}(z)=\za z$, where $\za=(1+\sqrt{-7})/2$ and the usual
lattice $\zL$ is generated by 1 and $\za$.  We prove that $f$ is not
the subdivision map of a finite subdivision rule with one tile type,
$f$ is not the subdivision map of a finite subdivision rule with two
tile types and 1-skeleton of the subdivision complex
homeomorphic to a circle, and $f$ is the
subdivision map of a finite subdivision rule with two tile types.

The minimal quadratic equation satisfied by $\za$ is $\za^2-\za+2=0$.
Since the constant term is $\za \overline{\za}=2$, the map $f$ is
indeed quadratic.  Moreover $\widetilde{f}(0)=0$,
$\widetilde{f}(1)=\za$, $\widetilde{f}(\za)=\za^2\equiv \za\mod 2\zL$
and $\widetilde{f}(\za+1)=\za^2+\za\equiv 0\mod 2\zL$.  Let $A=\wp
(\za+1)$, $B=\wp (1)$, $X=\wp (0)$, and $Y=\wp (\za)$.  Then $f(A)=X$,
$f(B)=Y$, $f(X)=X$, and $f(Y)=Y$.  Since $A$, $B$, $X$, and $Y$ are
the only postcritical points of $f$,
one critical point of $f$ maps to $A$ and one critical point of $f$
maps to $B$.  We normalize so that $\infty$ and 0 are the critical
points of $f$ with $f(\infty)=A$ and $f(0)=B$.
Figure~\ref{fig:dirdgraph} shows the mapping scheme for $f$.  Integers
next to arrows are local mapping degrees.

We can normalize $f$ so that $f(z) = \frac{Az^2 + B}{z^2 + 1}$.
Since $X = f(X) = f(A)$, $Y = f(Y) = f(B)$, and $f$ is even, $X =
-A$ and $Y = -B$.  Hence $\frac{A^3 + B}{A^2 + 1} = -A$ and
$\frac{AB^2 + B}{B^2 + 1} = -B$.  This gives $B = -2A^3 - A$ and
$AB + 1 = - B^2 - 1$. Substituting $B = - A(2A^2 + 1)$ into the
equation $AB + 1 = - B^2 -1$ yields $0 = 2A^6  + A^4 + 1 =
(A^2+1)(2A^4 - A^2 + 1)$. If $A = \pm i$ then $B = A$, so $2A^4 -
A^2 + 1 = 0$. Hence $f(z) = \frac{Az^2 + B}{z^2 + 1}$, where $A^2
= \frac{1 \pm \sqrt{-7}}{4}$ and $B = -A(2A^2 + 1)$. That is,
either $2A^2 = \alpha$ or $2A^2 = \overline{\alpha}$. (By choosing
$0$ and $\infty$ to be the fixed postcritical points, in
\cite[Section B.4]{M} Milnor gives the alternate form $f(z) = z
\frac{z + \alpha^2}{\alpha^2 z + 1}$ for $f$.)

\begin{figure}
\centerline{\includegraphics{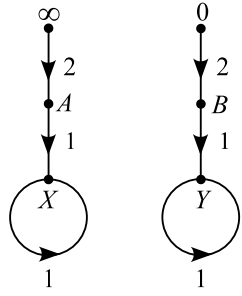}} \caption{The mapping
scheme of $f$.}
\label{fig:dirdgraph}
\end{figure}

We first prove by contradiction that $f$ is not the subdivision map of
a finite subdivision rule with one tile type.  Suppose that $f$ is the
subdivision map of a finite subdivision rule $\cR$ with one tile type.
Let $T$ be the 1-skeleton of the subdivision complex of $\cR$. By
Lemma~\ref{lemma:prune}, we may and do assume that the only vertices
of $T$ with valence $1$ or $2$ are postcritical points.

\begin{lemma}\label{lemma:vxcrit}  No vertex of $T$ is a
critical point of $f$.
\end{lemma}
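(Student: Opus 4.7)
The plan is to suppose, for contradiction, that a vertex $v$ of $T$ is a critical point of $f$. By the definition of a Latt\`es map recalled in Section~\ref{sec:defn}, no critical point of $f$ is postcritical, so $v\notin\{A,B,X,Y\}$ and the pruning hypothesis forces $\mathrm{val}_T(v)\ge 3$. Of the five tree types in Figure~\ref{fig:types}, the two in which every vertex is postcritical are immediately ruled out, so $T$ is one of the three types that contain an accidental vertex.

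The principal tool will be a local corner-counting identity. Let $T'$ denote the 1-skeleton of the subdivision $\cR(\SR)$. Then $T\subseteq T'$ and $f$ maps each tile of $\cR(\SR)$ homeomorphically onto the single tile of $\SR$. Counting tile-corners at any vertex $w$ of $T'$ and using that each tile-corner at $f(w)$ is covered exactly $\deg_f(w)$ times gives
\begin{equation*}
\mathrm{val}_{T'}(w)\;=\;\deg_f(w)\cdot\mathrm{val}_T(f(w)),
\end{equation*}
where $\deg_f(w)$ denotes the local degree of $f$ at $w$. Since $T\subseteq T'$, we also have $\mathrm{val}_T(w)\le\mathrm{val}_{T'}(w)$. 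Applied at $v$, where $\deg_f(v)=2$ and $f(v)\in\{A,B\}$, this yields $\mathrm{val}_T(v)\le 2\,\mathrm{val}_T(f(v))$.

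Two of the three accidental-vertex types in Figure~\ref{fig:types}---the one with a single valence-$4$ accidental vertex attached to four postcritical leaves, and the one with two adjacent valence-$3$ accidental vertices each carrying two postcritical leaves---have all four postcritical points as leaves, so $\mathrm{val}_T(f(v))=1$ and the inequality forces $\mathrm{val}_T(v)\le 2$, contradicting $\mathrm{val}_T(v)\ge 3$. This leaves only the type in which a valence-$3$ accidental vertex is joined to two postcritical leaves and to a valence-$2$ postcritical point $p$, with $p$ in turn joined to the fourth postcritical leaf. The inequality $\mathrm{val}_T(v)\le 2\,\mathrm{val}_T(f(v))$ now forces $f(v)=p$, so $p\in\{A,B\}$. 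Applying the corner-counting identity at $p$, which is non-critical with $\deg_f(p)=1$, yields $\mathrm{val}_T(p)\le\mathrm{val}_T(f(p))$; but the mapping scheme in Figure~\ref{fig:dirdgraph} gives $f(\{A,B\})=\{X,Y\}$, and with $p\in\{A,B\}$ both $X$ and $Y$ occur among the three postcritical leaves of this type, so $\mathrm{val}_T(f(p))=1$. Therefore $\mathrm{val}_T(p)\le 1$, contradicting $\mathrm{val}_T(p)=2$.

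The main obstacle I expect is the careful justification of the corner-counting identity $\mathrm{val}_{T'}(w)=\deg_f(w)\cdot\mathrm{val}_T(f(w))$; this relies on the fact that every tile of $\cR(\SR)$ maps homeomorphically onto the unique tile of $\SR$, so that the branched cover $f$ induces a $\deg_f(w)$-to-one bijection of tile-corners at $w$ onto tile-corners at $f(w)$. Once that identity is in hand, the proof reduces to the short case analysis over the three accidental-vertex types above.
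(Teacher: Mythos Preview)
Your proof is correct and follows essentially the same approach as the paper's: both rule out types 1 and 2 for lacking accidental vertices, dispose of the all-leaf types (3 and 5) via the valence inequality $\mathrm{val}_T(v)\le \deg_f(v)\cdot\mathrm{val}_T(f(v))$, and handle type 4 by first forcing $f(v)$ to be the valence-2 postcritical point and then applying the same inequality at that point to reach a contradiction. The only difference is cosmetic---you state the corner-counting identity explicitly, whereas the paper uses it implicitly in phrases like ``a critical vertex would have valence at most 2'' and ``$f(\infty)=A$ must have valence greater than 1.''
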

  \begin{proof} If a critical point of $f$ is a vertex of $T$, then it
is an accidental vertex (not a postcritical point).  A tree with type
1 or 2 has no accidental vertices.  The postcritical vertices of trees
of type 3 and 5 all have valence 1, and so a critical vertex would
have valence at most 2, which is impossible.  Suppose that $T$ has
type 4.  Then $T$ has just one accidental vertex, the vertex with
valence 3.  By symmetry we may assume that this vertex is $\infty$.
As before since $\infty$ has valence greater than 2, $f(\infty)=A$
must have valence greater than 1.  So $A$ is the vertex with valence
2.  But then $f(A)$ also has valence at least 2 because the degree of
$f$ at $A$ is 1.  This is impossible.  This shows that no vertex of
$T$ is a critical point of $f$.

\end{proof}

\begin{lemma}\label{lemma:tescrit}  Let $p$ and $q$ be
distinct vertices of $T$ such that $f(p),f(q)\in \{p,q\}$.  Then the
open segment $(p,q)$ of $T$ contains a critical point.
\end{lemma}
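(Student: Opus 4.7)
The plan is a proof by contradiction. Suppose the open segment $(p,q)$ contains no critical point of $f$, and let $\gamma = [p,q] \subseteq T$ denote the unique arc joining $p$ and $q$ in the tree $T$. By Lemma~\ref{lemma:vxcrit}, $p$ and $q$ are not critical points of $f$ either, so $f$ is a local homeomorphism at every point of $\gamma$, and the restriction $f|_\gamma \co \gamma \to T$ is locally injective.

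A locally injective continuous map from an arc to a tree is automatically globally injective: any repeated value would close up a loop in the image, and a loop in a tree would force backtracking, which violates local injectivity. Consequently $f|_\gamma$ is a topological embedding onto the unique arc in $T$ joining $f(p)$ and $f(q)$. Because $f|_\gamma$ is injective we have $f(p) \neq f(q)$, and the hypothesis $f(p), f(q) \in \{p,q\}$ then forces $\{f(p), f(q)\} = \{p,q\}$. Therefore $f(\gamma) = \gamma$ as subsets of $\widehat{\bC}$, and $f|_\gamma$ is a self-homeomorphism of the arc $\gamma$.

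To finish, equip $\widehat{\bC}$ with the flat orbifold metric pushed down from the Euclidean metric on $\bC$ via $\wp$. Because the lift $\widetilde{f}(z) = \za z$ stretches Euclidean length by $|\za| = \sqrt{2}$, the map $f$ multiplies flat arc length by $\sqrt{2}$. Since $f|_\gamma$ is injective, $\text{length}(f(\gamma)) = \sqrt{2}\cdot\text{length}(\gamma)$, while $f(\gamma) = \gamma$ as sets gives $\text{length}(f(\gamma)) = \text{length}(\gamma)$. Combining yields $\text{length}(\gamma) = \sqrt{2}\cdot\text{length}(\gamma)$, and hence $\text{length}(\gamma) = 0$, contradicting $p \neq q$.

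The main subtlety is the passage from local to global injectivity of $f|_\gamma$, which crucially uses the tree structure of $T$ together with the hypothesis that $[p,q]$ avoids all critical points of $f$. Once $f|_\gamma$ is identified as a self-homeomorphism of $\gamma$, the desired contradiction is immediate from the standard expansion property of Latt\`es maps in the induced flat metric.
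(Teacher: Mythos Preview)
Your argument follows the same arc as the paper's: both show that if $(p,q)$ misses the critical set then $f$ restricts to a self-homeomorphism of $[p,q]$, and then derive a contradiction from expansion. You unify the three cases via the local-to-global injectivity observation (which in particular rules out $f(p)=f(q)$), whereas the paper treats the case $f(p)=f(q)$ separately by a direct folding argument and only then handles the fixed and swapped cases. For the endgame the paper stays combinatorial (``the edges in $[p,q]$ never subdivide, so the mesh fails to go to $0$, contrary to the fact that $f$ is expanding''), while you appeal directly to the flat orbifold metric.

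There is one genuine gap in your metric step. The implication $\text{length}(\gamma)=\sqrt{2}\cdot\text{length}(\gamma)\Rightarrow\text{length}(\gamma)=0$ requires $\text{length}(\gamma)<\infty$, but the edges of $T$ are merely topological $1$-cells and nothing in the hypotheses forces $\gamma$ to be rectifiable in the flat orbifold metric; if $\text{length}(\gamma)=\infty$ the equation is vacuous. The repair is immediate and keeps your approach intact: lift $\gamma$ through $\wp$ to a compact embedded arc $\widetilde{\gamma}\subset\bC$ (possible since $\gamma$ is an arc). Then $\za\widetilde{\gamma}=\widetilde{f}(\widetilde{\gamma})$ is another lift of $f(\gamma)=\gamma$, so $\za\widetilde{\gamma}=g(\widetilde{\gamma})$ for some $g\in\zG$. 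Every element of $\zG$ is a Euclidean isometry, so comparing Euclidean diameters gives $|\za|\cdot\mathrm{diam}(\widetilde{\gamma})=\mathrm{diam}(\widetilde{\gamma})$ with $0<\mathrm{diam}(\widetilde{\gamma})<\infty$, the desired contradiction.
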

  \begin{proof} If $f$ maps both $p$ and $q$ to the same vertex, then
because $f$ maps the segment $(p,q)$ into the tree $T$, the image of
$(p,q)$ under $f$ must be folded somewhere.  Such a fold is the image
of a critical point.  So $(p,q)$ contains a critical point in this
case.

Now suppose that $f(p)=p$ and $f(q)=q$ and that $(p,q)$ contains no
critical point of $f$.  Then $f$ does not fold the segment $[p,q]$
anywhere.  It follows that $f$ maps $[p,q]$ homeomorphically onto
$[p,q]$.  But then the edges in $[p,q]$ never subdivide.  This implies
that the mesh of our finite subdivision rule does not go to 0,
contrary to the fact that $f$ is an expanding map.  So the lemma holds
if $f(p)=p$ and $f(q)=q$.

Finally, the case in which $f(p)=q$ and $f(q)=p$ can be proved just as
in the case in which $f(p)=p$ and $f(q)=q$.

This proves Lemma~\ref{lemma:tescrit}.

\end{proof}

\begin{lemma}\label{lemma:neiab}  Neither $A$ nor $B$ is in the
closed segment $[X,Y]$.
\end{lemma}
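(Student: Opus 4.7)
The plan is to prove that $A\notin[X,Y]$ by contradiction; the statement for $B$ follows by the same argument with the pairs $(A,X)$ and $(B,Y)$ interchanged, since the mapping scheme of $f$ is symmetric under this swap. So suppose $A\in[X,Y]$. Because $A$ is a postcritical point distinct from $X$ and $Y$, this forces $A$ into the open arc $(X,Y)$, so in particular the valence of $A$ in $T$ is at least $2$.

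The first key observation is a valence identity. Since $A$ is not a critical point (Lemma~\ref{lemma:vxcrit}) and $f$ is a subdivision map, $f$ is a local homeomorphism at $A$, so the valence of $A$ in $T$ equals the valence of $f(A)=X$; the same argument gives that the valence of $B$ equals that of $Y$. Using this I would dispose of four of the five tree types from Figure~\ref{fig:types}. In types~$3$ and~$5$ every postcritical vertex is a leaf, so the valence of $A$ is $1$, contradicting $A\in(X,Y)$. In type~$2$ the only postcritical vertex of valence exceeding $1$ is the unique central vertex, so the requirement $v_A=v_X\ge 2$ would identify $A$ with $X$. In type~$4$ the only postcritical vertex of valence exceeding $1$ is the single valence-$2$ vertex on the stick, giving the same contradiction.

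The remaining case is type~$1$, where $T$ is a path on the four postcritical vertices with valence sequence $(1,2,2,1)$. The valence identities together with $A\in(X,Y)$ force the path, up to reflection, to be $B-X-A-Y$. To rule this out I would apply Lemma~\ref{lemma:tescrit} to the pair $(X,A)$, placing a critical point in the open edge $(X,A)$, and then argue by cases on the positions of the two critical points $\infty$ and $0$ of $f$ (which map to $A$ and $B$ respectively) among the three open edges $(B,X)$, $(X,A)$, and $(A,Y)$. In each case I would identify an edge $e$ of $T$ whose $f$-image is a tree-arc traversing several edges of $T$; since the interior vertices of $\cR(T)$ lying in $e$ are exactly $f^{-1}(\{X,Y,A,B\})\cap e=\{X,Y,A,B,\infty,0\}\cap e$, the edge $e$ turns out to receive too few interior subdivision points to support its image. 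This contradicts the requirement that every edge of $\cR(T)$ map homeomorphically onto a single edge of $T$.

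The main obstacle is the type-$1$ case: although each of the four or so sub-cases reduces to a short edge-counting argument, one must keep careful track of which critical point lies on which open edge and of how the fold at that critical point propagates through the subdivision of $\cR(T)$. Everything else is quick bookkeeping from the valence identities $v_A=v_X$ and $v_B=v_Y$.
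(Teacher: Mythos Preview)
Your approach is correct but takes a substantially longer route than the paper's. One small fix: from $f$ being a local homeomorphism at $A$ you only get $v_A\le v_X$, not equality, because what equals $v_X$ is the valence of $A$ in the subdivided $1$-skeleton $f^{-1}(T)\supseteq T$, and that may exceed the valence of $A$ in $T$ itself. Fortunately the inequality suffices everywhere you use it: in types~$2$--$5$ it still gives $v_X\ge 2$ and hence $X=A$ or an immediate contradiction, and in type~$1$ you recover equality from $2\le v_A\le v_X\le 2$.

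The paper avoids the five-way tree-type split entirely. Its argument runs as follows. The image $f([A,Y])$ is a connected subtree of $T$ containing $f(A)=X$ and $f(Y)=Y$, hence contains $[X,Y]\ni A$; since $\infty$ is the \emph{unique} $f$-preimage of $A$, this places $\infty\in(A,Y)$. Because $f$ has local degree $1$ at $A$, the two germs of $[X,Y]$ at $A$ map to two distinct germs of $T$ at $X$; the germ toward $Y$ heads toward $A$ (as just shown), so the germ toward $X$ must point off $[X,Y]$, producing a branch of $T$ at $X$ outside $[X,Y]$. Following that branch to a leaf reaches $B$ (the only postcritical point not on $[X,Y]$), with $[X,B]\cap[X,Y]=\{X\}$. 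But now $f([X,B])\supseteq[X,Y]\ni A$ forces $\infty\in(X,B)$ as well, contradicting $\infty\in(A,Y)$. Your type-$1$ edge-counting ultimately detects the same collision---the sole preimage of $A$ would have to lie in two disjoint open arcs---but the paper reaches it in a few lines, uniformly, without first pinning down the shape of $T$.
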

  \begin{proof} Proceeding by contradiction, we may assume by symmetry
that $A\in [X,Y]$.  Since $f(A)=X$ and $f(Y)=Y$ there exists a point
in $(A,Y)$ which maps to $A$.  This point must be $\infty$.  Since $f$
maps $A$ to $X$ with degree 1, $f([X,Y])$ contains a nontrivial
segment which meets $[X,Y]$ at $X$.  But then there exists a
nontrivial segment $s$ in $T$ which contains a vertex of $T$ with
valence 1 and has $s \cap [X,Y] = X$.  This vertex with valence 1 must
be $B$.  Since $f(X)=X$ and $f(B)=Y$, it follows that $s$ contains a
preimage of $A$, namely, $\infty\in s$.  This is impossible.

\end{proof}

Now we apply Lemma~\ref{lemma:tescrit} to see that there exists a
critical point in $[X,Y]$.  By symmetry we may assume that $\infty\in
[X,Y]$.  Lemma~\ref{lemma:neiab} shows that $A\notin [X,Y]$.  Let $v$
be the vertex of $[X,Y]$ which is closest to $A$ in $T$.  If $v=Y$,
then because $f(X)=X$ and $f(\infty)=A$, $f$ maps some point in
$(X,\infty)\subseteq (X,Y)$ to $Y$.  This point must be $B$, which is
impossible by Lemma~\ref{lemma:neiab}.
Hence $v\ne Y$.  Likewise $v\ne X$.  So $v$ is
a vertex in $(X,Y)$ with valence at least 3.

In this paragraph we assume that $v$ is the only vertex of $T$ with
valence at least 3.  As in the previous paragraph we see that
$v$ is the vertex in $[X,Y]$ which is closest to both $A$ and $B$.
Lemma~\ref{lemma:vxcrit} implies that $v$ is not
a critical point.  Hence $f(v)$ has valence at least 3, and so
$f(v)=v$.  Since $f(X)=X$ and $f(v)=v$, Lemma~\ref{lemma:tescrit}
implies that $(X,v)$ contains a critical point $p$.  Since the
critical point maps to $A$ or $B$, $(X,v)$ contains a preimage of $v$.
Likewise $(v,Y)$ contains a critical point $q$, and
$(q,Y)$ contains a preimage of $v$.  We now have three points mapping
to $v$, which is impossible.

To complete this argument we assume that $T$ contains two vertices $u$
and $v$ with valence 3, that one of them, $v$, is in $(X,Y)$, and that
either $u\in (X,Y)$ or $v$ is the vertex in $[X,Y]$ closest to $u$.
As in the previous paragraph, we see that $f(u),f(v)\in \{u,v\}$.
First suppose that $f(u)=u$ and $f(v)=v$.  Lemma~\ref{lemma:tescrit}
implies that $(u,v)$ contains a critical point.  So does the segment
from $X$ to $[u,v]$ as well as the segment from $Y$ to $[u,v]$.  This
is impossible.  Next suppose that $f(u)=v$ and $f(v)=u$.  Let $w\in
(u,v)$.  Then $(u,v)$ contains a preimage of $w$.  So does the segment
from $X$ to $[u,v]$ as well as the segment from $Y$ to $[u,v]$.  We
now have three preimages of $w$, which is impossible.  We are left
with the case in which $f(u)=f(v)\in \{u,v\}$. Suppose in addition
that $u\notin [X,Y]$.  Recall that $\infty$ is a critical point in
$[X,Y]$.  Choose $z\in \{X,Y\}$ so that $v\notin [z,\infty]$.  Then
the image of $[z,\infty]$ covers $[u,v]$, and so $[z,\infty]$ contains
a preimage of $f(u)=f(v)$, giving three preimages, which is
impossible.  So $u\in [X,Y]$.  Now choose $z\in \{X,Y\}$ so that
neither $u$ nor $v$ lies in $(z,f(v))$.  Lemma~\ref{lemma:tescrit}
implies that $(z,f(v))$ contains a critical point $p$.  But then
$(z,p)$ contains a preimage of $f(v)$, giving a third preimage of
$f(v)$, which is once more impossible.

The proof that $f$ is not given by a finite subdivision rule with one
tile type is now complete.

We next prove by contradiction that $f$ is not given by a finite
subdivision rule with two tile types with 1-skeleton
of the subdivision complex homeomorphic
to a circle.  Suppose that $f$ is given by a finite subdivision
rule with two tile types with 1-skeleton $G$
of the subdivision complex homeomorphic to a
circle. If some open edge of $G$ is not in $f(G)$, then we delete
that open edge to obtain a finite subdivision rule for $f$ with
one tile type.  Since such a finite subdivision rule does not
exist, it follows that $f(G)=G$.

In this paragraph we assume that the restriction of $f$ to $G$ is
locally injective.  Then the restriction of $f$ to $G$ is a covering
map with degree either 1 or 2.  If this degree is 2, then every point
in $G$ has two preimages, but both $A$ and $B$ have at most one
preimage in $G$.  If this degree is 1, then the restriction of $f$ to
$G$ is a homeomorphism.  In this case the edges of $G$ do not
subdivide and the mesh of our finite subdivision rule fails to go to
0.  So the restriction of $f$ to $G$ is not locally injective.

Since the restriction of $f$ to $G$ is not locally injective, $G$
contains a critical point.  We may thus assume that $f$ is not locally
injective at $\infty\in G$.  Let $e_1$ and $e_2$ be the edges of $G$
which contain $A$ and suppose that $e_1$ is doubly covered by $f$ near
$\infty$.  But then $e_2$ is not in $f(G)$, for if it is, then some
point other than $\infty$ maps to $A$, which is not the case.
Thus $f$ is not given by a finite subdivision rule
with two tile types with 1-skeleton homeomorphic to a circle.

We finally prove that $f$ is the subdivision map of a finite
subdivision rule with two tile types. The left portion of
Figure~\ref{fig:exone} shows a parallelogram which is a fundamental
domain $F$ for the orbifold fundamental group $\zG$ of $f$.  In
addition to the vertices at the corners of $F$, there are vertices at
1 and $\za$.  There is also an extra edge joining 0 and $\za$.  The
hatched region in the right portion of Figure~\ref{fig:exone} shows
$\widetilde{f}^{-1}(F)$.  To verify this it is useful to note that
$\za^2-\za+2=0$ implies that $\za(1-\za)=2$ and so $1-\za=2\za^{-1}$.
The last identity shows that $2\za^{-1}$ is drawn correctly.  Moreover
the fact that 0, 2 and $\za+1$ are vertices of the parallelogram $F$
implies that 0, $2\za^{-1}$ and $\za^{-1}+1$ are vertices of the
parallelogram $\widetilde{f}^{-1}(F)$.  Thus $\widetilde{f}^{-1}(F)$
is drawn correctly.  Let $T$ be the tiling of the plane by the images
of $F$ under $\zG$.  Figure~\ref{fig:extwo} shows a part of $T$ drawn
with dashed line segments and part of $\widetilde{f}^{-1}(T)$ drawn
with solid line segments.  Few dashed line segments of $T$ are visible
because most of them are obscured by the line segments of
$\widetilde{f}^{-1}(T)$.  Now it is clear that there exists a
$\zG$-equivariant isotopy from $T$ to $\widetilde{f}^{-1}(T)$ rel
$\zL$.  This determines a finite subdivision rule $\cR$ with two tile
types, one triangle and one pentagon.  Figure~\ref{fig:exthree} gives
a schematic description of the subdivisions of the tile types of
$\cR$.  The edges drawn with dashes in Figure~\ref{fig:exthree} are
the edges whose edge types correspond to the line segments drawn with
dashes in Figure~\ref{fig:extwo}.  One easily checks that $\cR$ has
bounded valence and mesh approaching 0 combinatorially.  It now
follows as in the proof of the main theorem of \cite{CFP2} that $f$ is
the subdivision map of a finite subdivision rule isomorphic to $\cR$.
A stereographic projection of the subdivision complex for $f$ is shown
in Figure~\ref{fig:exfour} with hats indicating images of 0, 1, $\za$,
and $\za+1$.

\begin{figure}
\centerline{\includegraphics{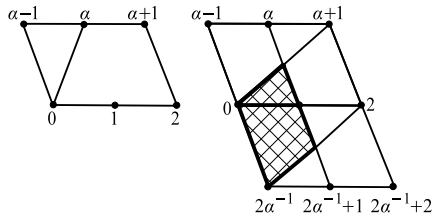}}
\caption{The fundamental fundamental domain $F$ and
$\widetilde{f}^{-1}(F)$.}
\label{fig:exone}
\end{figure}

\begin{figure}
\centerline{\includegraphics{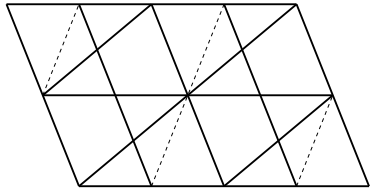}}
\caption{A small part of the tiling $T$ and $\widetilde{f}^{-1}(T)$.}
\label{fig:extwo}
\end{figure}

\begin{figure}
\centerline{\includegraphics{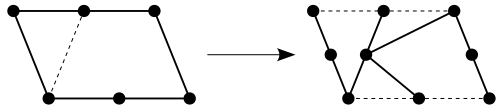}}
\caption{The finite subdivision rule $\cR$.}
\label{fig:exthree}
\end{figure}

\begin{figure}
\centerline{\includegraphics{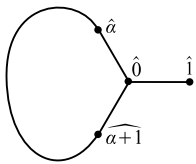}}
\caption{The subdivision complex for $f$.}
\label{fig:exfour}
\end{figure}

\end{document}